\def\@strippedMR{}
\def\@scanforMR#1#2#3\endscan{
  \ifx#1M\ifx#2R\def\@strippedMR{#3}
  \else\def\@strippedMR{#1#2#3}
  \fi\fi}
\renewcommand\MR[1]{\relax\ifhmode\unskip\spacefactor3000 \space\fi
  \@scanforMR#1\endscan
  MR\MRhref{\@strippedMR}{\@strippedMR}}
\newtheorem*{Thm*}{Theorem}
\newtheorem{Thm}{Theorem}
\newtheorem{Cor}[Thm]{Corollary}
\newtheorem{Prop}[Thm]{Proposition}
\newtheorem{Lemma}[Thm]{Lemma}
\theoremstyle{definition}
\newtheorem{Defn}{Definition}
\newtheorem{Remark}{Remark}
\newtheorem{Example}{Example}
\newcommand{\mf}[1]{\mathbb{#1}}
\newcommand{\mc}[1]{\mathcal{#1}}
\DeclareMathOperator{\NC}{\mathrm{NC}}
\DeclareMathOperator{\Part}{\mathcal{P}}
\DeclareMathOperator{\Int}{\mathit{Int}}
\DeclareMathOperator{\Inner}{\mathrm{Inn}}
\DeclareMathOperator{\Outer}{\mathrm{Out}}
\DeclareMathOperator{\Var}{\mathrm{Var}}
\DeclareMathOperator{\supp}{\mathrm{supp}}
\newcommand{\abs}[1]{\left\vert#1\right\vert}
\newcommand{\set}[1]{\left\{#1\right\}}
\renewcommand{\phi}{\varphi}
\title{Semigroups of distributions with linear Jacobi parameters}
\author[Anshelevich \& M{\l}otkowski]{Michael Anshelevich and Wojciech M{\l}otkowski}
\thanks{M.A. was supported in part by NSF grant DMS-0900935.
W.M. was supported by MNiSW: N~N201~364436.}
\address{Department of Mathematics, Texas A\&M University, College Station, TX 77843-3368}
\email{manshel@math.tamu.edu}
\address{Mathematical Institute,
University of Wroc\l aw,
Pl.~Grunwaldzki~2/4,
50-384~Wroc\l aw, Poland}
\email{mlotkow@math.uni.wroc.pl}
\subjclass[2010]{Primary 46L54; Secondary 33C45, 46L53, 05A18, 60J25}
\keywords{convolution semigroups, Jacobi parameters, free convolution, Meixner class}
\date{\today}
\begin{document}

\begin{abstract}
We show that a convolution semigroup $\{\mu_t\}$ of measures has Jacobi parameters polynomial in the convolution parameter $t$ if and only if the measures come from the Meixner class. Moreover, we prove the parallel result, in a more explicit way, for the free convolution and the free Meixner class. We then construct the class of measures satisfying the same property for the two-state free convolution. This class of two-state free convolution semigroups has not been considered explicitly before. We show that it also has Meixner-type properties. Specifically, it contains the analogs of the normal, Poisson, and binomial distributions, has a Laha-Lukacs-type characterization, and is related to the $q=0$ case of quadratic harnesses.
\end{abstract}

\maketitle

\section{Introduction.}

Any probability measure $\mu$ on the real line, all of whose moments are finite, has associated to it two sequences of Jacobi parameters $\set{\beta_i, \gamma_i}$: for example, $\mu$ is the spectral measure of the tridiagonal matrix
\[
\begin{pmatrix}
\beta_0 & \gamma_0 & 0 & 0 & \ddots \\
1 & \beta_1 & \gamma_1 & 0 & \ddots \\
0 & 1 & \beta_2 & \gamma_2 & \ddots \\
0 & 0 & 1 & \beta_3 & \ddots \\
\ddots & \ddots & \ddots & \ddots & \ddots
\end{pmatrix}.
\]
We will denote this fact by
\[
J(\mu) =
\begin{pmatrix}
\beta_0, & \beta_1, & \beta_2, & \beta_3, & \ldots \\
\gamma_0, & \gamma_1, & \gamma_2, & \gamma_3, & \ldots
\end{pmatrix},
\]
with $\beta_n(\mu):=\beta_n$, $\gamma_n(\mu):=\gamma_n$. These parameters are related to the moments of the measure via the Viennot-Flajolet \cite{Flajolet,Viennot-Notes} and Accardi-Bo\.zejko \cite{AccBozGaussianization} formulas. On the other hand, in probability theory and other applications, measures frequently come in time-dependent convolution semigroups. In general, the time dependence of the Jacobi parameters is complicated (they are rational functions of $t$). However, for the Gaussian convolution semigroup
\[
\mu_t(x) = \frac{1}{\sqrt{2 \pi t}} e^{-x^2/2t},
\]
the Jacobi parameters are simply
\[
\beta_n(t) = 0, \qquad \gamma_n(t) = (n+1) t,
 \]
while for the Poisson convolution semigroup
\[
\mu_t(x) = e^{-t} \sum_{k=0}^\infty \frac{1}{k!} t^k \delta_k(x),
\]
they are
\[
\beta_n(t) = n + t, \qquad \gamma_n(t) = (n+1) t.
\]
So it is natural to ask, what are all the measures whose Jacobi parameters are linear (in the calculus sense, that is, affine) functions of the convolution parameter? A seemingly more general question is to describe all collections $\set{\alpha_n, \beta_n, \gamma_n, a_n, b_n, c_n}$ such that the spectral measures $\set{\mu_t}$ of tridiagonal matrices
\begin{equation}
\label{Jacobi-matrix}
t \begin{pmatrix}
\beta_0 & \gamma_0 & 0 & 0 & \ddots \\
\alpha_0 & \beta_1 & \gamma_1 & 0 & \ddots \\
0 & \alpha_1 & \beta_2 & \gamma_2 & \ddots \\
0 & 0 & \alpha_2 & \beta_3 & \ddots \\
\ddots & \ddots & \ddots & \ddots & \ddots
\end{pmatrix}
+
\begin{pmatrix}
b_0 & c_0 & 0 & 0 & \ddots \\
a_0 & b_1 & c_1 & 0 & \ddots \\
0 & a_1 & b_2 & c_2 & \ddots \\
0 & 0 & a_2 & b_3 & \ddots \\
\ddots & \ddots & \ddots & \ddots & \ddots
\end{pmatrix}
\end{equation}
form a convolution semigroup. In this paper we provide the answer: such measures form precisely the Meixner class \cite{Meixner}. Thus we add a new, more dynamical description to already numerous known characterizations of this class. In particular, far from being infinite dimensional, this family of measures is described by only four parameters. For more on the Meixner class, see the probabilistic characterizations in \cite{Laha-Lukacs,Wes-commutative}, its role in statistics (where it is known under the name of quadratic exponential families) \cite{Morris,Morris-Statistical,Diaconis-Gibbs-sampling}, and a more combinatorial description \cite{KimZeng}.

The reason convolution semigroups appear in probability theory is that if $\set{X(t)}$ is a process with stationary independent increments, and $\mu_t$ is the distribution of $X(t)$, then $\set{\mu_t}$ form a convolution semigroup. In non-commutative probability theories, one encounters other notions of independence, and correspondingly other convolution operations based on them. In many ways, these operations are more complicated than the usual convolution; notably, the usual operation is distributive,
\[
\mu \ast (\nu_1 + \nu_2) = \mu \ast \nu_1 + \mu \ast \nu_2
\]
while the other ones are not. Nevertheless, in other ways they appear to be simpler. The combinatorial theory of such convolutions is typically based on an appropriate sequence of cumulants which linearize it; for example, the classical cumulants $r^\ast_n(\mu)$ defined via
\[
\sum_{n=1}^\infty \frac{(i \theta)^n}{n!} r^\ast_n(\mu) = \log \int_{\mf{R}} e^{i \theta x} \,d\mu(x)
\]
have the property that
\begin{equation}
\label{Linearizing}
r^\ast_n(\mu \ast \nu) = r^\ast_n(\mu) + r^\ast_n(\nu).
\end{equation}
In particular, $r^\ast(\mu^{\ast t}) = t \cdot  r^\ast(\mu)$: cumulants are always proportional to the convolution parameter $t$. While, as pointed out above, there is a nice relation between Jacobi parameters and moments, as well as a relation between cumulants and moments (see Section~\ref{Subsec:cumulants}), we are not aware of a simple relation between Jacobi parameters and cumulants. However, in \cite{Mlotkowski-Cumulants-Jacobi}, the second author found a formula relating Jacobi parameters and \emph{free} cumulants, which linearize the \emph{free convolution} \cite{Nica-Speicher-book}. This allows us to provide a constructive proof that Jacobi parameters are linear in the free convolution parameter if and only if the measure belongs to the free Meixner class considered in \cite{SaiConstant,AnsMeixner,Boz-Bryc} and numerous other sources. We then give a simple but indirect argument which provides the corresponding characterization for the Meixner class. Another consequence of the analysis is that there are no measures whose Jacobi parameters are polynomial functions of the free convolution parameter of degree greater than one, so the spectral measures of matrices from equation~\eqref{Jacobi-matrix} are no more general.

Yet another convolution operation was introduced in \cite{BLS96} in relation to what the authors called ``conditionally free probability'', but is better called two-state free probability theory. As the name indicates, this is a convolution operation $\boxplus_c$ on pairs of measures, and as such does not really have a classical analogue. The techniques from \cite{Mlotkowski-Cumulants-Jacobi} allow us to find all pairs of measures $(\widetilde{\mu},\mu)$ such that if $(\widetilde{\mu}_t,\mu_t):=(\widetilde{\mu},\mu)^{\boxplus_c t}$ then the Jacobi parameters of $\widetilde{\mu}_t$ are linear with respect to this convolution. In fact, it suffices to only require that the Jacobi parameters of $\widetilde{\mu}_t$ are polynomials in $t$, and the linearity of the Jacobi parameters of both $\widetilde{\mu}_t$ and $\mu_t$ then follows automatically. Unlike in the cases above, this class has not been explicitly described before. It consists of measures whose Jacobi parameters do not depend on $n$ for $n \geq 2$ (except for the special case described in Proposition~6).

Even the fact that these (pairs of) measures form a two-state free convolution semigroup is apparently new. We show that these measures also have, in the two-state context, Meixner-type properties. Namely, just like the Meixner and free Meixner classes, this family includes the two-state versions of the normal, Poisson, and binomial distributions; their two-state cumulants satisfy a quadratic recursion; they have a two-state Laha-Lukacs characterization \cite{Boz-Bryc-Two-states}; and they appear as a subclass of the $q=0$ case of quadratic harnesses \cite{Bryc-Meixner}.

\textbf{Acknowledgements.} The paper was started during the 12th workshop on Non-commutative Harmonic Analysis with Applications to Probability at the Banach center. M.A. would like to thank the organizers for an enjoyable conference. He would also like to thank W{\l}odek Bryc and Jacek Weso{\l}owski for explaining their work to him. Perhaps most importantly, we thank Laura Matusevich for pointing out a missing assumption in Theorem~\ref{Thm:Linear-two-state}, which led to Proposition~\ref{Prop:Boolean}. Finally, we are grateful to the referee for a careful reading of the paper, and numerous very useful comments and suggestions.

\section{Background.}

\subsection{Partitions.}
\label{Subsec:partitions}
A \textit{partition} of a linearly ordered set $X$
is a family $\pi$ of nonempty, pairwise disjoint subsets of $X$, called \textit{blocks} of $\pi$,
such that $\bigcup\pi=X$. A partition is \textit{noncrossing} if whenever $x_1<x_2<x_3<x_4$, $x_1,x_3\in V_1\in\pi$ and $x_2,x_4\in V_2\in\pi$
then $V_1=V_2$. Every noncrossing partition admits a natural partial order: $U\preceq V$ if there are $r,s\in V$ such that $r\le k\le s$ holds for every $k\in U.$ Now we can define {\it depth} of a block $U\in\pi$, namely
\[
d(U,\pi):=|\{V\in\pi:U\preceq V\ne U\}|.
\]
If $d(U,\pi)\ge1$ then we define the {\it derivative} of $U$ as the unique block $U'\in\pi$ such that $U\preceq U'$ and $d(U',\pi)=d(U,\pi)-1$. The derivatives of higher orders are defined by putting $V^{(k)}:=\left(V^{(k-1)}\right)'$.

In particular, a block of a noncrossing partition with $d(U, \pi) = 0$ is called \textit{outer},
and a block with $d(U, \pi) \geq 1$ is called \textit{inner}.
An \textit{interval partition} is a non-crossing partition with only outer blocks.
For the set $\{1,2,\ldots,m\}$, we will denote the lattice of all partitions by $\Part(m)$, the lattice of all noncrossing partitions by $\NC(m)$, and the lattice of all interval partitions by $\Int(m)$. In addition, $\mathrm{NC}_{1,2}(m)$ will stand for the class of all partitions $\pi\in\mathrm{NC}(m)$ such that $|V|\le 2$ holds for every $V\in\pi$. The family of all outer (resp. inner) blocks of $\pi$ will be denoted by $\mathrm{Out}(\pi)$ (resp. $\mathrm{Inn}(\pi)$).

\begin{Example}
\label{Example:Noncrossing}
The noncrossing partition $\pi=\big\{\{1,5,10\},\{2,3\},\{4\},\{6,7,9\},\{8\},\{11,12\},\{13\}\big\}$
is drawn in Figure~\ref{Figure:Noncrossing}. The blocks $\{1,5,10\}$, $\{11,12\}$, $\{13\}$ are outer (and so have depth $0$); the rest of the blocks are inner, with $d(\{6,7,9\}, \pi) = 1$ and $d(\{8\}, \pi) = 2$.
$\{8\}' = \{6,7,9\}$ and $\{2,3\}' = \{1,5,10\}$.

\begin{figure}[h,t]
\includegraphics[width=0.8\textwidth,height=0.2\textwidth]{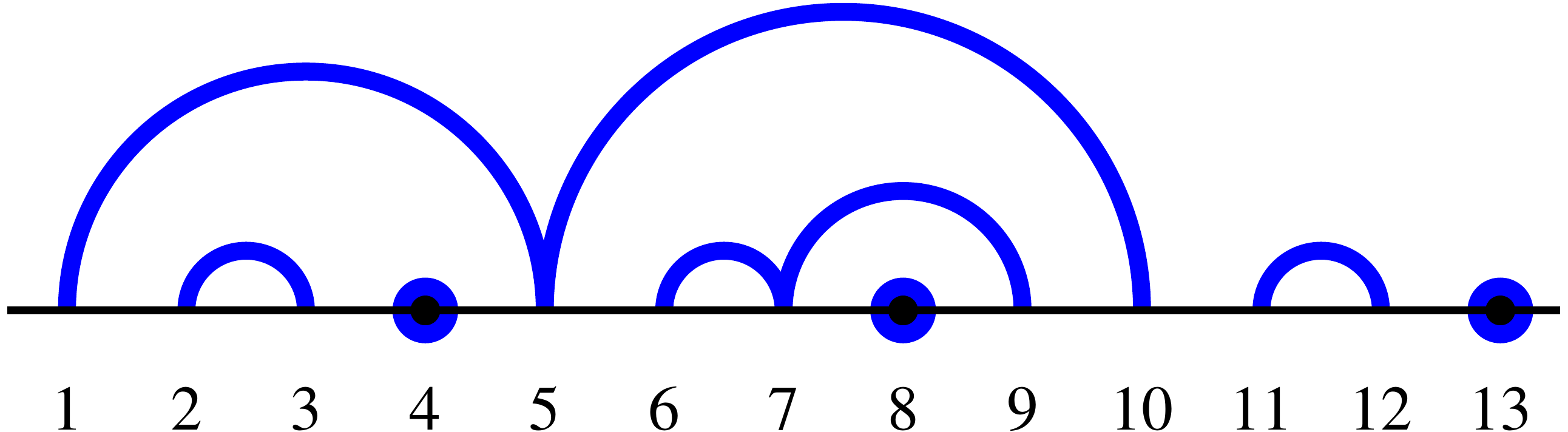}
\label{Figure:Noncrossing}
\caption{Illustration for Example~\ref{Example:Noncrossing}.}
\end{figure}
\end{Example}

\subsection{Jacobi parameters.}
\label{Subsec:Jacobi}
Throughout the paper, $\mu$ will be a probability measure on $\mathbb{R}$
all of whose \textit{moments}
\begin{equation}
s_m (\mu) :=\int_{\mathbb{R}}x^m d\mu(x)
\end{equation}
are finite.
Then there is a sequence $\{P_m\}_{m=0}^\infty$
of monic polynomials, with $\mathrm{deg}P_m=m$,
which are orthogonal with respect to $\mu$.
They satisfy a recurrence relation: $P_0(x)=1$ and for $m\ge0$
\begin{equation}
xP_m(x)=P_{m+1}(x)+\beta_m P_m(x)+\gamma_{m-1}P_{m-1}(x),
\end{equation}
under convention that $P_{-1}(x)=0$,
where the \textit{Jacobi parameters} \cite{Chihara-book} satisfy
$\beta_m\in\mathbb{R}$ and
$\gamma_m\ge0$.
Then we will write
$$
J({\mu})=\left(\begin{array}{ccccc}
\beta_0,&\beta_1,&\beta_2,&\beta_3,&\dots\\
\gamma_0,&\gamma_1,&\gamma_2,&\gamma_3,&\dots
\end{array}
\right).
$$
$\{P_m\}$ are unique for $m \leq \abs{\supp(\mu)}$. Moreover $N := \abs{\supp(\mu)} < \infty$ if and only if $\gamma_{N-1} = 0$ and $\gamma_m > 0$ for $m < N-1$. In this case for $m \geq N$, $P_{m+1}$ are not uniquely determined, and $\beta_m, \gamma_m$ are undefined, so that the sequence of Jacobi parameters is finite. By convention, we may still write infinite sequences of Jacobi parameters, but their terms after the first $\gamma_m = 0$ should be disregarded.

The Viennot-Flajolet theory \cite{Flajolet,Viennot-Notes} gives the relation between moments of a measure and its Jacobi
parameters in terms of Motzkin paths. We will use a related formula of Accardi and Bo\.zejko \cite{AccBozGaussianization} expressing the same relation using non-crossing partitions:
\begin{equation}
\label{accardiozejko}
s_m(\mu)=\sum_{\sigma\in\mathrm{NC}_{1,2}(m)}
\prod_{\substack{V\in\sigma\\|V|=1}}\beta_{d(V,\sigma)}\cdot
\prod_{\substack{V\in\sigma\\|V|=2}}\gamma_{d(V,\sigma)}.
\end{equation}
This formula should be compared with the formula~\eqref{Free-moment-cumulant} below.

\subsection{Cumulants}
\label{Subsec:cumulants}
The classical cumulants $r^\ast_n(\mu)$ \cite{Shi}, free cumulants
\[
r_n(\mu) = r_n^\boxplus(\mu)
\]
\cite{Spe90,Spe94,Nica-Speicher-book}, Boolean cumulants $r^\uplus_n(\mu)$ \cite{SW97}, and two-state free cumulants
\[
R_n(\widetilde{\mu}, \mu) = r^{\boxplus_c}_n(\widetilde{\mu}, \mu)
\]
\cite{BLS96} are defined via the following moment-cumulant formulas, which express them implicitly in terms of the moments $s_m(\mu)$:
\begin{equation}
\label{Classical-moment-cumulant}
s_m(\mu) = \sum_{\pi \in \Part(m)} \prod_{V \in \pi} r^\ast_{\abs{V}}(\mu),
\end{equation}
\begin{equation}
\label{Free-moment-cumulant}
s_m(\mu) = \sum_{\pi \in \NC(m)} \prod_{V \in \pi} r_{\abs{V}}(\mu),
\end{equation}
\begin{equation}
\label{Boolean-moment-cumulant}
s_m(\mu) = \sum_{\pi \in \Int(m)} \prod_{V \in \pi} r^\uplus_{\abs{V}}(\mu),
\end{equation}
and
\begin{equation}
\label{C-free-moment-cumulant}
s_m(\widetilde{\mu}) = \sum_{\pi \in \NC(m)} \prod_{V \in \Outer(\pi)} R_{\abs{V}}(\widetilde{\mu}, \mu) \prod_{U \in \Inner(\pi)} r_{\abs{U}}(\mu).
\end{equation}

\subsection{Convolutions}
Using cumulants, we can define in a uniform way the classical convolution $\ast$, the free convolution $\boxplus$, and the Boolean convolution $\uplus$, via equation~\eqref{Linearizing} and its analogs, for example
\begin{equation*}
r_n(\mu \boxplus \nu) = r_n(\mu) + r_n(\nu).
\end{equation*}
The two-state free (or conditionally free---these terms will be used interchangeably) convolution $\boxplus_c$  is an operation on pairs of measures, defined by
\[
R_n\bigl( (\widetilde{\mu}, \mu) \boxplus_c (\widetilde{\nu}, \nu) \bigr) = R_n(\widetilde{\mu}, \mu) + R_n(\widetilde{\nu}, \nu)
\]
and $(\widetilde{\mu}, \mu) \boxplus_c (\widetilde{\nu}, \nu) = (\widetilde{\tau}, \mu \boxplus \nu)$. Note that the classical convolution defined in this way does coincide with the more familiar formula
\[
(\mu \ast \nu)(A) = \int \mu(A - x) \,d\nu(x),
\]
but there are no such explicit formulas for the other operations. Instead, each of them is related to an appropriate notion of independence, see the references above.

For any of the convolution operations, for example for $\ast$, a \emph{convolution semigroup} generated by $\mu$ is a family of measures $\set{\mu_t}$ such that $\mu_1 = \mu$ and $\mu_t \ast \mu_s = \mu_{t + s}$. So a classical convolution semigroup is characterized by the property that
\[
r_n^\ast(\mu_t) = t \cdot r_n^\ast(\mu),
\]
and a similar relation holds between other convolution semigroups and corresponding cumulants.

A priori, our semigroups will be indexed by $t \in \mf{N}$. For $\mu$ $\ast$-infinitely divisible, $\mu^{\ast t}$ is defined for all $t \in [0, \infty)$; a similar comment applies for $\boxplus$- and $\boxplus_c$-infinitely divisible distributions. However, for free convolution $\boxplus$ and two-state free convolution $\boxplus_c$, for \emph{any} $\mu$ one can extend the semigroup to $t \in [1, \infty)$, see Lecture~14 of \cite{Nica-Speicher-book} and \cite{Belinschi-C-free-unbounded}. Moreover, for the Boolean convolution $\uplus$, any $\mu$ is infinitely divisible \cite{SW97}.

\subsection{Generating functions}
It is frequently more convenient to work with generating functions instead of moments and cumulants. For example, the Fourier transform
\[
\mc{F}_\mu(z) = \sum_{m=0}^\infty \frac{1}{m!} s_m(\mu) (i z)^m
\]
is the exponential moment generating function of $\mu$, and
\[
C_\mu(z) = \sum_{m=1}^\infty \frac{(i z)^m}{m!} r^\ast_m(\mu)
\]
is its (classical) cumulant generating function.

The ordinary moment generating function of $\mu$ is
\[
M^\mu(z) = \sum_{m=1}^\infty s_m(\mu) z^m.
\]

We denote the free cumulant generating function (also called the $R$-transform) by
\[
R^\mu(z) = \sum_{k=1}^\infty r_k(\mu) z^k
\]
and the two-state free cumulant generating function by
\[
R^{\widetilde{\mu}, \mu}(z) = \sum_{k=1}^\infty R_k(\widetilde{\mu}, \mu) z^k.
\]
Note that these are the combinatorial $R$-transforms, which differ by a factor of $z$ from the versions used in complex analysis.

We have the functional relations
\begin{equation}
\label{Relation:R-transform}
M^\mu(z) = R^\mu((1 + M^\mu(z)) z)
\end{equation}
and
\begin{equation}
\label{Relation:Two-state-transform}
\eta^{\widetilde{\mu}}(z) = (1 + M^\mu(z))^{-1} R^{\widetilde{\mu},\mu}((1 + M^\mu(z)) z).
\end{equation}
Here the eta-transform (Boolean cumulant generating function) $\eta^\mu$ satisfies
\begin{equation}
\label{Relation:Eta-transform}
\eta^\mu(z) = 1 - (1 + M^\mu(z) )^{-1}.
\end{equation}
Recall that the corresponding property in the classical case is
\[
C_\mu(z) = \log \mc{F}_\mu(z).
\]

\subsection{Continued fractions}
The Jacobi parameters of $\mu$ also appear in the continued fraction expansion
\begin{equation}
\label{Continued-fraction-M}
1 + M^\mu(z)
=
\cfrac{1}{1 - \beta_0(\mu) z -
\cfrac{\gamma_0(\mu) z^2}{1 - \beta_1(\mu) - \cfrac{\gamma_1(\mu) z^2}{\ldots}}},
\end{equation}
and from equation~\eqref{Relation:Eta-transform},
\begin{equation}
\label{Continued-fraction-Eta}
\eta^\mu(z)
= \beta_0(\mu) z +
\cfrac{\gamma_0(\mu) z^2}{1 - \beta_1(\mu) z -
\cfrac{\gamma_1(\mu) z^2}{1 - \beta_2(\mu) - \cfrac{\gamma_2(\mu) z^2}{\ldots}}}.
\end{equation}

\begin{Defn}
\emph{Meixner distributions} are measures with Jacobi parameters
\[
J(\mu) =
\begin{pmatrix}
\beta_0, & b + \beta_0, & 2 b + \beta_0, & 3 b + \beta_0, & \ldots \\
\gamma_0, & 2(c + \gamma_0), & 3 (2 c + \gamma_0), & 4 (3 c + \gamma_0), & \ldots
\end{pmatrix}
\]
for $\gamma_0 \geq 0$, and either $c \geq 0$ or $c = -\gamma_0/N$, $N \in \mf{N}$ (in the second case, the sequence of Jacobi parameters is finite). In particular this class includes the normal (Gaussian) distribution for $\beta_0 = b = c = 0$, $\gamma_0 = 1$, Poisson distribution for $\beta_0 = b = \gamma_0 = 1$, $c = 0$, binomial distributions for $\beta_0 = p N$, $\gamma_0 = p (1-p) N$, $b = 1 - 2 p$, $c = - p(1-p)$, gamma distributions for $\beta_0 = \gamma_0 = \alpha$, $b=2$, $c=1$, and negative binomial distributions for $\beta_0 = \frac{p}{1-p} r$, $\gamma_0 = \frac{p}{(1-p)^2} r$, $b = \frac{1+p}{1-p}$, $c = \frac{p}{(1-p)^2}$. See \cite{SchOrthogonal} for more details.

Moreover, for fixed $\beta_0, \gamma_0, b, c$, the measures $\set{\mu_t: t \in \mf{N}}$ with Jacobi parameters
\[
\beta_n(t) = n b + \beta_0 t, \quad \gamma_n(t) = (n+1) ( n c + \gamma_0 t )
\]
all belong to the Meixner class and form a convolution semigroup. If $c = - \gamma_0/N < 0$, the semigroup can be extended to $\set{\mu_t: t = \frac{n}{N}, n \in \mf{N}}$. $\mu$ is $\ast$-infinitely divisible if and only if $c \geq 0$, in which case the measures $\set{\mu_t: t \geq 0}$ form a convolution semigroup, a \emph{(classical) Meixner semigroup}.
\end{Defn}

\begin{Defn}
\emph{Free Meixner distribution} are measures with Jacobi parameters
\begin{equation}
\label{Free-Meixner-Jacobi}
J(\mu) =
\begin{pmatrix}
\beta_0, & b + \beta_0, & b + \beta_0, & b + \beta_0, & \ldots \\
\gamma_0, & c + \gamma_0, & c + \gamma_0, & c + \gamma_0, & \ldots
\end{pmatrix}
\end{equation}
for $\gamma_0 \geq 0$, $c + \gamma_0 \geq 0$, in other words their Jacobi parameters are independent of $n$ for $n \geq 1$. The normalized free Meixner distributions $\mu_{b,c}$ have mean $\beta_0 = 0$, variance $\gamma_0 = 1$, and parameters $b \in \mf{R}$, $c \geq -1$; general free Meixner distributions are affine transformations of these. More explicitly,
\[
d\mu_{b,c}(x) = \frac{1}{2 \pi} \cdot \frac{\sqrt{\Bigl(4 (1 + c) - (x - b)^2\Bigr)_+}}{1 + b x + c x^2} \,dx + 0, 1, \text{or } 2 \text{ atoms},
\]
see \cite{SaiConstant,AnsMeixner,Boz-Bryc}.

Free Meixner distribution with Jacobi parameters \eqref{Free-Meixner-Jacobi} is $\boxplus$-infinitely divisible if and only if $c\ge0$,
see Theorem~\ref{Thm:Linear-one-state} below. Moreover, for fixed $b, c$, these distributions form a two-parameter free convolution semigroup with respect to $\beta_0$ and $\gamma_0$. This follows from the formula for their $R$-transform in \cite{SaiConstant} or from the formula for their free cumulants in \cite{Hinz-Mlotkowski-Free-Cumulants}. In the particular case $c \geq -1$, the measure with Jacobi parameters \eqref{Free-Meixner-Jacobi} is precisely $\mu_{b,c}^{\boxplus \gamma_0} \boxplus \delta_{\beta_0}$ (note that the free convolution with a delta measure is a shift) and in this case it follows directly that
\[
\left( \mu_{b,c}^{\boxplus \gamma_0'} \boxplus \delta_{\beta_0'} \right) \boxplus \left( \mu_{b,c}^{\boxplus \gamma_0''} \boxplus \delta_{\beta_0''} \right) = \mu_{b,c}^{\boxplus (\gamma_0' + \gamma_0'')} \boxplus \delta_{\beta_0' + \beta_0''}.
\]

\end{Defn}

\begin{Remark}
\label{Remark:Free-Meixner}
There are numerous characterizations of the free Meixner class in free probability. Here is a partial list.
\begin{enumerate}
\item
The following measures all belong to the free Meixner class: free normal (semicircular) distributions have $b = c = 0$ (and so their Jacobi parameters do not depend on $n$), free Poisson (Marchenko-Pastur) distributions have $c = 0$, $b \neq 0$, and free binomial distributions correspond to $c < 0$ (including the Bernoulli distributions for $c = - \gamma_0$).
\item
The orthogonal polynomials of the measure $\mu$ have a generating function of the ``resol\-vent-type'' form $\frac{F(z)}{1 - x G(z)}$ \cite{AnsMeixner}.
\item
The free Laha-Lukacs property: two freely independent random variables $X, Y$ with the same distribution $\mu$ satisfy the property that the conditional expectation $\phi[X | X + Y]$ is linear in $X+Y$ and the conditional variance $\text{Var}[X | X + Y]$ is quadratic in $X+Y$ \cite{Boz-Bryc}.
\item
The free cumulant generating function of the measure $\mu_{b,c}$ satisfies a ``Riccati difference equation''
\begin{equation*}
\frac{R(z)}{z^2} = 1 + b\  \frac{R(z)}{z} + c \left( \frac{R(z)}{z} \right)^2,
\end{equation*}
see the single-variable case of Theorem~6 from \cite{AnsMulti-Sheffer}, or the $q=0$ case of Remark 5.4 from \cite{Boz-Bryc}.
\item
The measure $\mu$ generates a quadratic free exponential family \cite{Bryc-Cauchy-Stieltjes}.
\item
The measure $\mu$ is characterized in terms of its free Jacobi field \cite{Bozejko-Lytvynov-Meixner-I}.
\end{enumerate}
All of these properties have analogs for classical Meixner distributions, see the references in the Introduction.
\end{Remark}

\section{The free convolution.}

Formulas \eqref{accardiozejko} and \eqref{Free-moment-cumulant} relate moments of a measure to its Jacobi parameters, resp.\ free cumulants. It is also possible to find a direct relation between
free cumulants and Jacobi parameters, see \cite{Mlotkowski-Cumulants-Jacobi}.
For this purpose we will need some additional notions.

A \textit{labelling} of a noncrossing partition
$\sigma$ is a function $\kappa$ on $\sigma$
such that for any $V\in\sigma$ we have
$\kappa(V)\in\{0,1,\dots,d(V,\sigma)\}$.
For a labelling $\kappa$ of a noncrossing partition $\sigma$
we denote by $\mathcal{R}(\sigma,\kappa)$ the smallest
equivalence relation on $\sigma$ containing all the pairs
$\left(V^{(i)},V^{(j)}\right)$ with $V\in\sigma$,
$0\le i,j\le\kappa(V)$.
By $\mathrm{NCL}_{1,2}^{1}(m)$ (not to be confused with non-crossing linked partitions introduced by Dykema \cite{Dykema-Multilinear-function-series}) we will denote the family of all
pairs $(\sigma,\kappa)$ such that $\sigma\in\mathrm{NC}_{1,2}(m)$,
$\kappa$ is a labelling of $\sigma$
and $\mathcal{R}(\sigma,\kappa)=\sigma\times\sigma$.
In particular, $\sigma$ has only one outer block.

\begin{Example}
\label{Example:Connected}
In Figure~2 are drawn three labellings of the partition
$\sigma = \big\{\{1,6\},\{2,5\},\{3,4\}\big\}$: $\kappa_1(\{3,4\}) = 2$, $\kappa_2(\{3,4\}) = 1$,
$\kappa_2(\{2,5\}) = 1$, $\kappa_3(\{3,4\}) = 1$, with the rest of the values zero.
For each label $\kappa(V)$, we connect $V$ to its derivatives of order $1, 2, \ldots, \kappa(V)$.
Pictorially, $\mathcal{R}(\sigma,\kappa) = \sigma \times \sigma$ if all the blocks of $\sigma$
are connected in this fashion. $\kappa_1$ and $\kappa_2$ produce connected partitions,
while under $\mathcal{R}(\sigma,\kappa_3)$, $\{2,5\} \sim \{3,4\}$ but $\{1,6\} \not \sim \{2,5\}$.
\end{Example}

\begin{figure}[h,t]
\label{Figure:Connected}
\includegraphics[width=0.3\textwidth,height=0.15\textwidth]{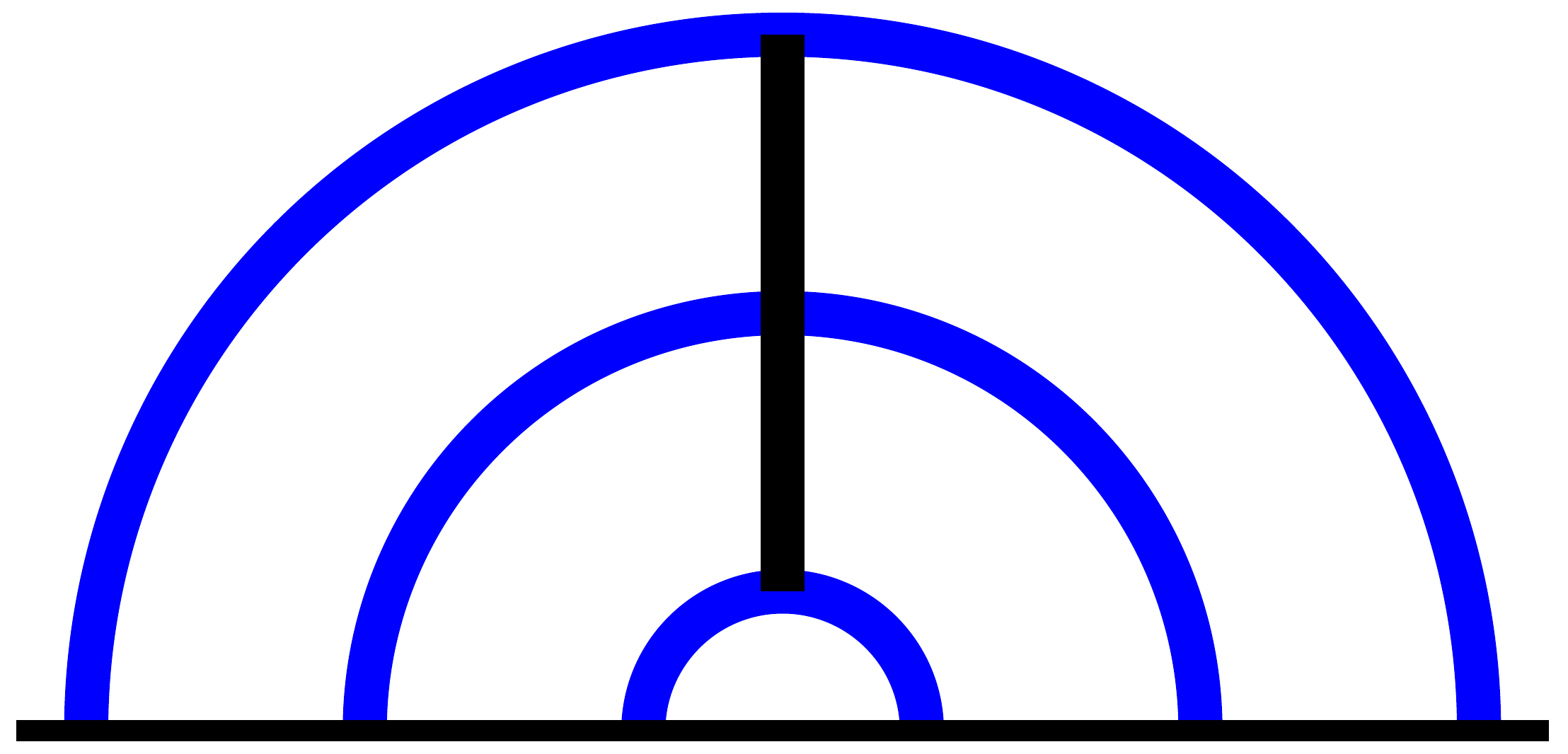}
\includegraphics[width=0.3\textwidth,height=0.15\textwidth]{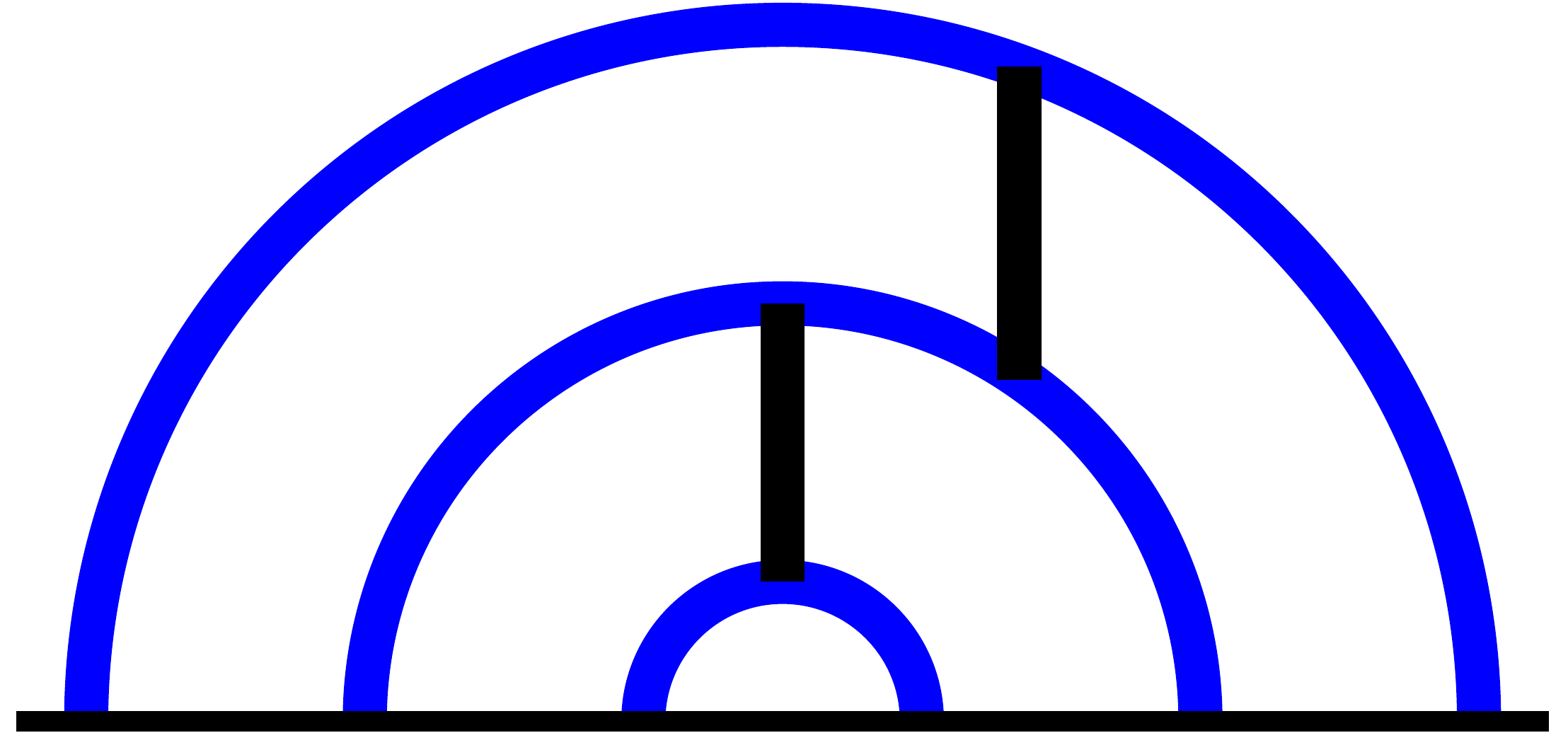}
\includegraphics[width=0.3\textwidth,height=0.15\textwidth]{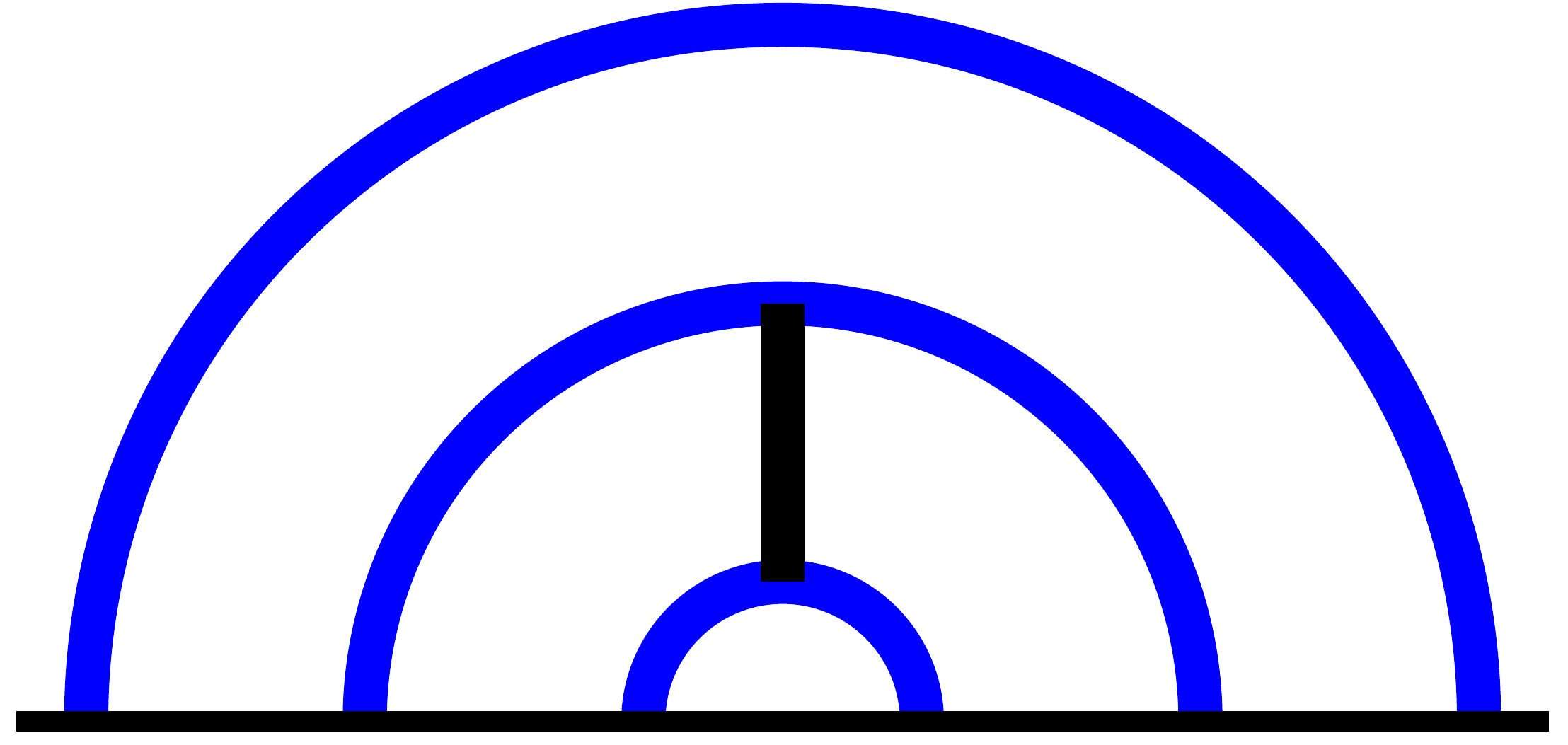}
\caption{Illustration for Example~\ref{Example:Connected}.}
\end{figure}

In view of \cite{Mlotkowski-Cumulants-Jacobi}, we have the following relation between
free cumulants and the Jacobi parameters of a given probability measure $\mu$:
\begin{equation}\label{freerm}
r_m=\sum_{(\sigma,\kappa)\in\mathrm{NCL}_{1,2}^{1}(m)}w(\sigma,\kappa),
\end{equation}
where $w(\sigma,\kappa):=\prod_{V\in\sigma}w(V,\kappa(V))$ and
\begin{equation}
w(V,k):=\left\{\begin{array}{ll}
\beta_k-\beta_{k-1}\phantom{ee}&\mbox{if $|V|=1$,}\\
\gamma_k-\gamma_{k-1}&\mbox{if $|V|=2$,}
\end{array}\right.
\end{equation}
under convention that $\beta_{-1}=\gamma_{-1}=0$.
In particular,
\begin{align}
r_1&=\beta_0,\label{freer1}\\
r_2&=\gamma_0,\label{freer2}\\
r_3&=\gamma_0(\beta_1-\beta_0),\label{freer3}\\
r_4&=\gamma_0\big[(\beta_1-\beta_0)^2+(\gamma_1-\gamma_0)\big],\label{freer4}\\
r_5&=\gamma_0\big[(\beta_1-\beta_0)^3
+3(\gamma_1-\gamma_0)(\beta_1-\beta_0)+\gamma_1(\beta_2-\beta_1)\big],\label{freer5}\\
r_6&=\gamma_0\big[(\beta_1-\beta_0)^4
+6(\gamma_1-\gamma_0)(\beta_1-\beta_0)^2\nonumber
+4\gamma_1(\beta_2-\beta_1)(\beta_1-\beta_0)\\
&\;\;\;\;\;+\gamma_1(\beta_2-\beta_1)^2+2(\gamma_1-\gamma_0)^2
+\gamma_1(\gamma_2-\gamma_1)\big].\label{freer6}
\end{align}

\begin{Thm}
\label{Thm:Linear-one-state}
Let $\mu$ be a probability measure with Jacobi parameters
$$
J({\mu})=\left(\begin{array}{ccccc}
\beta_0,&\beta_1,&\beta_2,&\beta_3,&\dots\\
\gamma_0,&\gamma_1,&\gamma_2,&\gamma_3,&\dots
\end{array}
\right).
$$

If for $t\in\mathbb{N}$, the Jacobi parameters for the free powers of $\mu$ are
$$
J({\mu}^{\boxplus t})=\left(\begin{array}{ccccc}
\beta_0(t),&\beta_1(t),&\beta_2(t),&\beta_3(t),&\dots\\
\gamma_0(t),&\gamma_1(t),&\gamma_2(t),&\gamma_3(t),&\dots
\end{array}
\right),
$$
and all the parameters $\beta_m(t)$, $\gamma_m(t)$ are polynomials on $t$,
then
\begin{equation}\label{freelinear}
\beta_1=\beta_2=\beta_3=\dots\quad\hbox{and}\quad
\gamma_1=\gamma_2=\gamma_3=\dots,
\end{equation}
so that $\mu$ is a free Meixner distribution.

On the other hand, if the Jacobi parameters of $\mu$
are such that (\ref{freelinear}) holds then,
putting
\begin{equation}
\label{Notation}
b:=\beta_1-\beta_0, \ c:=\gamma_1-\gamma_0,
\end{equation}
the measure $\mu_t = \mu^{\boxplus t}$ is well defined whenever $t\ge0$
and $c+t\gamma_0\ge0$ and then
\begin{equation}
\label{Free-powers}
J(\mu_t)=\left(\begin{array}{ccccc}
\beta_0 t,& b + \beta_0 t,& b + \beta_0 t,& b + \beta_0 t,& \dots\\
\gamma_0 t,& c + \gamma_0 t,& c + \gamma_0 t ,& c +\gamma_0 t,& \dots
\end{array}
\right).
\end{equation}
In particular, $\mu$ is $\boxplus$-infinitely divisible
if and only if $c \geq 0$.
In this case,
\begin{equation}\label{freelinearr}
R^{\mu}(z)= \beta_0 z +  \gamma_0 z^2\int_{\mathbb{R}}\frac{d\rho(x)}{1-zx},
\end{equation}
where
\begin{equation}
\label{jrho}
J({\rho})=\left(\begin{array}{ccccc}
b ,& b,& b,& b,&\dots\\
c ,& c,& c,& c,&\dots
\end{array}
\right),
\end{equation}
so that $\rho$ is a semicircular distribution.
\end{Thm}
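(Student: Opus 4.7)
The plan is to combine formula \eqref{freerm}---which relates free cumulants to Jacobi parameters---with the linearity $r_m(\mu^{\boxplus t}) = t \, r_m(\mu)$. The reverse direction and the R-transform formula will flow from a direct computation; the forward direction will proceed by induction on $n$, and the principal obstacle will be a structural analysis of certain noncrossing partitions.

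\emph{Reverse direction.} Assume \eqref{freelinear} and let $\nu_t$ denote the measure whose Jacobi parameters are the right-hand side of \eqref{Free-powers}; these are valid precisely when $t\gamma_0 \geq 0$ and $c + t\gamma_0 \geq 0$. Applying \eqref{freerm} to $\nu_t$, the weights $w_t(V,k)$ vanish for $k \geq 2$, so only labellings with $\kappa(V) \in \{0,1\}$ can contribute. The connectivity requirement $\mathcal{R}(\sigma,\kappa) = \sigma \times \sigma$ then forces $\kappa = 0$ on the unique outer block and $\kappa = 1$ on every inner block: indeed, an inner block $V$ at depth $d$ is connected to the outer block only if each ancestor $V, V', \ldots, V^{(d)}$ carries a positive label. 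Hence each contributing partition yields a single factor of $t\beta_0$ or $t\gamma_0$ from the outer block, together with a product of $b$'s and $c$'s from inner blocks; summing gives $r_m(\nu_t) = t \, r_m(\mu)$, so $\nu_t = \mu^{\boxplus t}$, and infinite divisibility is equivalent to $c \geq 0$. To extract \eqref{freelinearr}, I reorganize the sum for $r_k(\mu)$ ($k \geq 2$) by fixing the outer pair as $\{1,k\}$: the restriction of $\sigma$ to $\{2,\ldots,k-1\}$ then ranges over $\mathrm{NC}_{1,2}(k-2)$ weighted by $b$ on singletons and $c$ on pairs, which by \eqref{accardiozejko} equals $s_{k-2}(\rho)$ for $\rho$ with the Jacobi parameters in \eqref{jrho}. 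Summing over $k \geq 2$ yields the integral formula.

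\emph{Forward direction.} I argue by induction that $\beta_n(t) = t\beta_0 + b$ and $\gamma_n(t) = t\gamma_0 + c$ for every $n \geq 1$; the cases $n = 0, 1$ follow from \eqref{freer1}--\eqref{freer4}. For the inductive step, I examine $r_{2n+3}(\mu^{\boxplus t})$: since a singleton in $\mathrm{NC}_{1,2}$ must have a pair as its parent (and hence all of its ancestors are pairs), the only $\sigma \in \mathrm{NC}_{1,2}(2n+3)$ admitting a block at depth $n+1$ is the fully nested partition $\sigma_0 = \{\{1, 2n+3\}, \{2, 2n+2\}, \ldots, \{n+1, n+3\}, \{n+2\}\}$, whose central singleton lies at depth $n+1$. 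The only source of a factor $\beta_{n+1}(t) - \beta_n(t)$ is therefore the labelling of $\sigma_0$ in which the central singleton receives $\kappa = n+1$, which automatically satisfies $\mathcal{R}(\sigma_0,\kappa) = \sigma_0 \times \sigma_0$. The inductive hypothesis kills all labels $\geq 2$ on the nested pairs, so the sum over the remaining labellings of these pairs simplifies to $t\gamma_0(t\gamma_0 + c)^n$. Equating the $\beta_{n+1}$-parts of $r_{2n+3}(\mu^{\boxplus t})$ and $t \, r_{2n+3}(\mu)$ yields
\[
\bigl(\beta_{n+1}(t) - \beta_n(t)\bigr)(t\gamma_0 + c)^n = (\beta_{n+1} - \beta_n)(\gamma_0 + c)^n.
\]
For $\gamma_0 > 0$ and $n \geq 1$, the degree-$n$ polynomial $(t\gamma_0 + c)^n$ can multiply $\beta_{n+1}(t) - \beta_n(t)$ into a constant only when this difference vanishes, giving $\beta_{n+1}(t) = \beta_n(t)$ and hence $\beta_{n+1} = \beta_n$ at $t = 1$. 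An identical argument at $r_{2n+4}$---with the nested partition now carrying a central pair---yields $\gamma_{n+1}(t) = \gamma_n(t)$. Degenerate cases ($\gamma_0 = 0$ or finite support of $\mu$) are handled by terminating the induction appropriately. The chief difficulty is the combinatorial content just described---showing that $\sigma_0$ is the unique partition contributing a $\beta_{n+1}$ term and that its labelling sum evaluates to $t\gamma_0(t\gamma_0 + c)^n$---after which the polynomial-comparison argument is immediate.
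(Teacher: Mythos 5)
Your proposal is correct and follows essentially the same route as the paper: both directions rest on the M{\l}otkowski formula \eqref{freerm}, with the forward induction isolating the fully nested partition (central singleton for $r_{2n+3}$, central pair for $r_{2n+4}$) as the unique source of the new Jacobi parameter, summing the admissible labels of the surrounding pairs to $t\gamma_0(t\gamma_0+c)^n$, and comparing degrees in $t$; the converse and formula \eqref{freelinearr} are obtained exactly as in the paper via the outer-label-$0$/inner-label-$1$ reduction and the identification $r_{m+2}=\gamma_0 s_m(\rho)$ through \eqref{accardiozejko}. Your treatment of the degenerate cases ($\gamma_0=0$, finite support) is more cursory than the paper's explicit point-mass discussion, and your $\gamma$-step implicitly uses the just-established equality $\beta_{n+1}(t)=\beta_n(t)$ to kill configurations with a deep singleton, but these are presentational points, not gaps.
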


\begin{proof}
Putting in formulas (\ref{freerm})--(\ref{freer6})
$r_m t$, $\beta_{m}(t)$, $\gamma_m(t)$, $w_t$ instead of
$r_m$, $\beta_m$, $\gamma_m$ and $w$ respectively we see
that
\[
\beta_0(t)=\beta_0 t, \quad\gamma_0(t)=\gamma_0 t.
\]
If $\gamma_0 = 0$, then $\mu$ is a point mass $\delta_{\beta_0}$. In this case we can take
\[
J({\mu}^{\boxplus t})=\left(\begin{array}{ccccc}
\beta_0 t,& 0,& 0,& \dots\\
0,& 0,& 0,& \dots
\end{array}
\right),
\]
which satisfies both the assumptions and the conclusions of the theorem.
From now on, we assume that $\gamma_0 > 0$.
From (\ref{freer3}) we get $r_3 t=\gamma_0 t(\beta_1(t)-\beta_0 t)$
which yields
\[
\beta_1(t)=\beta_1-\beta_0+\beta_0 t.
\]
Similarly, from (\ref{freer4}) we get
\[\gamma_1(t)=\gamma_1-\gamma_0+\gamma_0 t.\]

Now assume that all the Jacobi parameters $\beta_m(t)$, $\gamma_m(t)$ are
polynomials on $t$. We claim that $\beta_k(t)=\beta_1-\beta_0+\beta_0 t$
and $\gamma_k(t)=\gamma_1-\gamma_0+\gamma_0 t$ for all $k\ge1$.
Fix $d\ge2$ and assume that this holds for all $1\le k<d$.
Then for a block $V$, with $1\le|V|\le2$
and for $1<k<d$ we have $w_t(V,k)=0$. Now we consider
formula (\ref{freerm}) for $r_{2d+1}t$.

Put
\begin{align*}
\sigma_1&:=\big\{\{1,2d+1\},\{2,2d\},\{3,2d-1\},\dots,\{d,d+2\},\{d+1\}\big\},\\
\mathcal{K}_1&:=\{\kappa:(\sigma_1,\kappa)\in\mathrm{NCL}_{1,2}^{1}(2d+1),\,\,\kappa(\{d+1\})=d\}.
\end{align*}
Note that if $(\sigma,\kappa)\in\mathrm{NCL}_{1,2}^{1}(2d+1)\setminus(\{\sigma_1\}\times\mathcal{K}_1)$,
and $w_t(\sigma,\kappa)\ne0$
then $\kappa(V)=1$ for all $V\in\sigma\setminus\big\{\{1,2d+1\}\big\}$
and then
$$
w_t(\sigma,\kappa)=\gamma_0 t
\prod_{\substack{V\in\sigma\\|V|=1}}(\beta_1-\beta_0)\times
\prod_{\substack{V\in\sigma,\,\,|V|=2,\\V\ne\{1,2d+1\}}}(\gamma_1-\gamma_0).
$$
Now we observe that if $w_t(\sigma_1,\kappa)\ne0$, $\kappa\in\mathcal{K}_1$,
then $\kappa(V)\in\{0,1\}$ for all inner blocks $V\in\sigma_1$, $V\ne\{d+1\}$.
Therefore
\begin{align*}
\sum_{\kappa\in\mathcal{K}_1}w_t(\sigma_1,\kappa)
&=\gamma_0 t\big(\beta_{d}(t)-\beta_{d-1}(t)\big)
\prod_{k=1}^{d-1}\sum_{i=0}^{1}\left(\gamma_i(t)-\gamma_{i-1}(t)\right)\\
&=\gamma_0 t\big(\beta_d(t)-\beta_{d-1}(t)\big)\gamma_1(t)^{d-1},
\end{align*}
which is a polynomial of degree at least $d\ge2$, unless
$\beta_{d}(t)=\beta_{d-1}(t)=\beta_1-\beta_0+\beta_0 t$.

Now we consider the formula for $r_{2d+2}t$. Put
\begin{align*}
\sigma_2&:=\big\{\{1,2d+2\},\{2,2d+1\},\dots,\{d+1,d+2\}\big\},\\
\mathcal{K}_2&:=\{\kappa:(\sigma_2,\kappa)\in\mathrm{NCL}_{1,2}^{1}(2d+2),\,\, \kappa(\{d+1,d+2\})=d\}.
\end{align*}
Similarly as before, and using the previous step, we conclude that for
$(\sigma,\kappa)\in\mathrm{NCL}_{1,2}^{1}(2d+2)\setminus(\{\sigma_2\}\times\mathcal{K}_2)$
if we have $w_t(\sigma,\kappa)\ne0$ then $\kappa(V)=1$ for all
$V\in\sigma\setminus\big\{\{1,2d+2\}\big\}$ and then
$$
w_t(\sigma,\kappa)=\gamma_0 t
\prod_{\substack{V\in\sigma\\|V|=1}}(\beta_1-\beta_0)\times
\prod_{\substack{V\in\sigma,\,\,|V|=2,\\V\ne\{1,2d+2\}}}(\gamma_1-\gamma_0).
$$

Now we observe that if $w_t(\sigma_2,\kappa)\ne0$, $\kappa\in\mathcal{K}_2$,
then $\kappa(V)\in\{0,1\}$ for all inner blocks $V\in\sigma$, $V\ne\{d+1,d+2\}$.
Therefore
\begin{align*}
\sum_{\kappa\in\mathcal{K}_2}w_t(\sigma_2,\kappa)
&=\gamma_0 t\big(\gamma_{d}(t)-\gamma_{d-1}(t)\big)
\prod_{k=1}^{d-1}\sum_{i=0}^{1}\left(\gamma_i(t)-\gamma_{i-1}(t)\right)\\
&=\gamma_0 t\big(\gamma_d(t)-\gamma_{d-1}(t)\big)\gamma_1(t)^{d-1},
\end{align*}
which is again a polynomial of degree at least $d\ge2$, unless
$\gamma_{d}(t)=\gamma_{d-1}(t)=\gamma_1-\gamma_0+\gamma_0 t$.
This proves the first part.

Conversely, suppose that $\mu$ has Jacobi parameters given by equation~\eqref{freelinear}. As we have already noticed, if $(\sigma,\kappa)\in\mathrm{NCL}_{1,2}^{1}(m)$,
$V\in\sigma$ and $w(V,\kappa(V))\ne0$ then either $V$ is the outer block
of sigma (so $\kappa(V)=0$) or $\kappa(V)=1$. Then, using notation~\eqref{Notation}, for $\mu_t$ defined via equation \eqref{Free-powers}, we have $\mu_1 = \mu$, $r_1(\mu_t)=\beta_0 t$ and for $m\ge0$
\begin{equation}
\label{free-cumulant-product}
r_{m+2}(\mu_t)= t\cdot\gamma_0 \sum_{\sigma\in\mathrm{NC}_{1,2}(m)}\prod_{\substack{V\in\sigma\\|V|=1}} b
\cdot\prod_{\substack{V\in\sigma\\|V|=2}} c
\end{equation}
so that $r_m(\mu_t)=t\cdot r_m(\mu)$. Therefore $\mu_t = \mu^{\boxplus t}$.

Finally, it follows from equation \eqref{free-cumulant-product} that
 if $c \geq 0$ then $r_{m+2}=\gamma_0\cdot s_m(\rho)$
where the measure $\rho$ is defined by (\ref{jrho}), which proves (\ref{freelinearr})
(c.f. \cite{SaiConstant}).
\end{proof}

\section{The classical convolution.}

In \cite{Hasebe-Saigo-Monotone-cumulants}, the authors define cumulants $\set{r^\star_n}$ for an abstract convolution operation $\star$ via the following three properties. All convolutions and cumulants considered in this paper satisfy them (with the exception of the conditionally free cumulants; these latter satisfy properties which are similar to the ones below, but we choose instead to treat them separately in the next section).

\begin{enumerate}
\item
$r^\star_n(\mu^{\star t}) = t \cdot  r^\star_n(\mu)$, where $t \in \mf{N}$.
\item
$r^\star_n(D_\lambda \mu) = \lambda^n r^\star_n(\mu)$,
where $(D_\lambda \mu)(A) = \mu(\lambda^{-1} A)$.
\item
For any $n$, there is a polynomial $Q_n$ in $n-1$ variables such that
\[
s_n(\mu) = r^\star_n(\mu) + Q_n(r^\star_1(\mu), \ldots, r^\star_{n-1}(\mu)).
\]
\end{enumerate}

\begin{Lemma}
\label{Lemma:Initial-cumulants}
Suppose measures $\set{\mu_t}$ form a convolution semigroup with respect to an operation $\star$, with corresponding cumulants $\set{r^\star_k(t) = t \cdot r_k}$ satisfying axioms above. Then
\begin{enumerate}
\item
Denoting $s_i(t) = s_i(\mu_t)$ and $r_i = r_i^\star(\mu_1)$,
\begin{align*}
s_1(t) & = r_1 t, \\
s_2(t) & = r_2 t + a_{11} r_1^2 t^2, \\
s_3(t) & = r_3 t + a_{21} r_2 r_1 t^2 + a_{111} r_1^3 t^3, \\
s_4(t) & = r_4 t + a_{31} r_3 r_1 t^2 + a_{22} r_2^2 t^2 + a_{211} r_2 r_1^2 t^3 + a_{1111} r_1^4 t^4.
\end{align*}
\item
Suppose $a_{11} = 1$. Then
\begin{align*}
\beta_0(t) & = \beta_0 t, \\
\gamma_0(t) & = \gamma_0 t.
\end{align*}
\item
Suppose $\mu$ is not a point mass so that $\gamma_0 \neq 0$, and
\begin{equation}
\label{Coefficients}
a_{11} = a_{111} = a_{1111} = 1, \quad a_{211} = a_{21}^2 - 2 a_{21} + 3.
\end{equation}
Then
\begin{align*}
\beta_1(t) & = b + (a_{21} - 2) \beta_0 t, \\
\gamma_1(t) & = \left( b' - b^2 \right) + \left((a_{31} - 2 a_{21} + 2) b \beta_0 + (a_{22} - 1) \gamma_0 \right) t,
\end{align*}
an so all of these are linear in $t$.
\end{enumerate}
\end{Lemma}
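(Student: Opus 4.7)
The plan is to treat the three parts in sequence, with the main technical content lying in deriving the linear form of $\beta_1(t)$ and $\gamma_1(t)$ from the assumed algebraic relations among the $a$'s.

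For (a), the three axioms combine directly. By axiom (3), $s_n(\mu_t)$ is a polynomial in $r^\star_1(\mu_t), \ldots, r^\star_n(\mu_t)$, which by axiom (1) equal $t r_1, \ldots, t r_n$, so every monomial involving $k$ cumulant factors contributes exactly the factor $t^k$. Applying axiom (2) with the dilation $D_\lambda$ forces any monomial $r_{i_1}\cdots r_{i_k}$ appearing in $s_n$ to satisfy $i_1+\cdots+i_k = n$, since moments scale as $\lambda^n$ while each $r_j$ scales as $\lambda^j$. Enumerating weighted partitions for $n\le 4$, and noting that the singleton $r_n t$ has coefficient $1$ by axiom (3), gives the four displayed formulas.

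For (b), from the three-term recurrence $xP_0 = P_1 + \beta_0 P_0$ and $xP_1 = P_2 + \beta_1 P_1 + \gamma_0 P_0$ one extracts $\beta_0 = s_1$ and $\gamma_0 = s_2 - s_1^2$. Substituting (a) yields $\beta_0(t) = r_1 t$ and $\gamma_0(t) = r_2 t + (a_{11}-1) r_1^2 t^2$; the hypothesis $a_{11}=1$ kills the quadratic term, and matching at $t=1$ identifies $r_1 = \beta_0$, $r_2 = \gamma_0$.

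For (c), the inputs are the standard orthogonality formulas
\[
\beta_1 \gamma_0 = s_3 - 2 s_1 s_2 + s_1^3,\qquad
\gamma_0 \gamma_1 = s_4 - (\beta_0+\beta_1) s_3 + (\beta_0\beta_1 - \gamma_0) s_2,
\]
obtained from $\langle P_2, x\rangle = 0$ and $\langle P_2, x^2\rangle = \gamma_0\gamma_1$. Substituting part (a) into the first identity, the $t^3$ terms cancel automatically once $a_{11} = a_{111} = 1$, yielding
\[
\beta_1(t) \cdot r_2 t = r_3 t + (a_{21}-2) r_1 r_2 t^2,
\]
and dividing by $\gamma_0(t) = r_2 t$ gives the claimed linear form with $b := r_3/r_2$. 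For $\gamma_1(t)$, one substitutes part (a) together with the just-found expression for $\beta_1(t)$ into the second identity. The $t^4$ contribution collapses by $a_{11}=a_{111}=a_{1111}=1$; the constant term reduces to $r_4 - r_3^2/r_2$, which after division by $r_2$ becomes $b' - b^2$ with $b' := r_4/r_2$; the $t^2$ coefficient, after division by $r_2$, becomes $(a_{31} - 2 a_{21} + 2)\, b\, \beta_0 + (a_{22}-1)\gamma_0$; and the $t^3$ coefficient equals a multiple of $a_{211} - a_{21}^2 + 2 a_{21} - 3$, which vanishes precisely by the hypothesis in~\eqref{Coefficients}.

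The principal obstacle is the bookkeeping in the $\gamma_1$ computation. Many $t^3$ monomials appear, including seemingly unrelated $b r_1^3$ terms coming from both $(\beta_0+\beta_1)s_3$ and $(\beta_0\beta_1 - \gamma_0)s_2$; only after these $b r_1^3$ contributions cancel against each other do all remaining $t^3$ terms collapse to a single multiple of $r_1^2 r_2$, making visible the precise algebraic condition $a_{211} = a_{21}^2 - 2 a_{21} + 3$ that the hypothesis was designed to enforce.
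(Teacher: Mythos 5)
Your proof is correct and follows essentially the same route as the paper, whose proof is only a two-sentence sketch: part (a) from the three cumulant axioms (homogeneity under dilation plus $t$-linearity of cumulants), and parts (b)--(c) by combining part (a) with the moment--Jacobi relation of Accardi--Bo\.zejko, which is exactly what your orthogonality identities $\beta_1\gamma_0 = s_3 - 2s_1s_2 + s_1^3$ and $\gamma_0\gamma_1 = s_4 - (\beta_0+\beta_1)s_3 + (\beta_0\beta_1-\gamma_0)s_2$ encode. Your detailed bookkeeping (cancellation of the $t^3$ and $t^4$ terms, with $b = r_3/\gamma_0$, $b' = r_4/\gamma_0$) checks out and simply makes explicit what the paper leaves to the reader.
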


\begin{proof}
Part (a) follows by combining properties (a-c) of the cumulants. The rest follow by combining part (a) with the formula~\eqref{accardiozejko}.
\end{proof}

\begin{Prop}
\label{Prop:General-Meixner}
In the setting of the preceding lemma, suppose $a_{11} = 1$. Let $\set{\mu_t}$, $\set{\mu_t'}$ be two convolution semigroups all of whose Jacobi parameters are polynomial in $t$. Suppose moreover that $\beta_0(\mu_t) = \beta_0(\mu_t')$, $\gamma_0(\mu_t) = \gamma_0(\mu_t')$, $\beta_1(\mu_t) = \beta_1(\mu_t')$, $\gamma_1(\mu_t) = \gamma_1(\mu_t')$, and that $\gamma_1(\mu_t)$ varies with $t$. Then $\mu_t$ and $\mu_t'$ have the same moments.
\end{Prop}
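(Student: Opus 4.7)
The plan is to induct on $K\ge 1$, showing that the Jacobi parameters $\beta_k(\mu_t)$, $\gamma_k(\mu_t)$ agree with $\beta_k(\mu_t')$, $\gamma_k(\mu_t')$ as polynomials in $t$ for every $k\le K$; equality of all Jacobi parameters then forces equality of all moments via~\eqref{accardiozejko}. The base case $K=1$ is exactly the hypothesis of the proposition, together with Lemma~\ref{Lemma:Initial-cumulants}(b) which, since $a_{11}=1$, handles the level $k=0$.

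For the inductive step, assume agreement through level $K$. Every partition in $\mathrm{NC}_{1,2}(m)$ with $m\le 2K+2$ has maximum depth at most $K$ (a block at depth $K+1$ already requires at least $2K+3$ points), so the Accardi--Bo\.zejko formula~\eqref{accardiozejko} gives $s_m(\mu_t)=s_m(\mu_t')$ for $m\le 2K+2$. Inverting axiom (c) of the cumulant system recursively then yields $r^\star_n(\mu_1)=r^\star_n(\mu_1')$ for $n\le 2K+2$, and combining axioms (a) and (c) produces the identity
\[
s_n(\mu_t)=t\cdot r^\star_n(\mu_1)+Q_n\bigl(t\cdot r^\star_1(\mu_1),\dots,t\cdot r^\star_{n-1}(\mu_1)\bigr),
\]
whence
\[
s_{2K+3}(\mu_t)-s_{2K+3}(\mu_t')=t\bigl[r^\star_{2K+3}(\mu_1)-r^\star_{2K+3}(\mu_1')\bigr],
\]
a polynomial of degree at most $1$ in $t$.

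The combinatorial key is that the ``rainbow'' partition
\[
\sigma_\beta=\bigl\{\{1,2K+3\},\{2,2K+2\},\dots,\{K+1,K+3\},\{K+2\}\bigr\}
\]
is the \emph{unique} element of $\mathrm{NC}_{1,2}(2K+3)$ with a block at depth $K+1$: the $K+1$ nested pairs consume $2K+2$ points and leave a lone singleton in the middle, while a pair at depth $K+1$ cannot fit into $2K+3$ points. All other partitions have maximum depth $\le K$, so their contributions to $s_{2K+3}(\mu_t)$ and $s_{2K+3}(\mu_t')$ agree by the inductive hypothesis, and~\eqref{accardiozejko} reduces to
\[
s_{2K+3}(\mu_t)-s_{2K+3}(\mu_t')=\gamma_0(t)\gamma_1(t)\cdots\gamma_K(t)\bigl[\beta_{K+1}(\mu_t)-\beta_{K+1}(\mu_t')\bigr].
\]
Equating this with the expression from the previous paragraph and dividing through by $t$ (using $\gamma_0(t)=\gamma_0\,t$), the left side is a constant while the right side contains the factor $\gamma_1(t)$, which by hypothesis is a non-constant polynomial. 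A polynomial product whose factors include a non-constant polynomial can itself be a constant only if one of the remaining factors vanishes identically, so $\beta_{K+1}(\mu_t)-\beta_{K+1}(\mu_t')\equiv 0$.

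An entirely parallel argument with $s_{2K+4}$ and the rainbow-pair partition $\sigma_\gamma=\bigl\{\{1,2K+4\},\{2,2K+3\},\dots,\{K+2,K+3\}\bigr\}$ --- the unique element of $\mathrm{NC}_{1,2}(2K+4)$ with a \emph{pair} at depth $K+1$, while any singleton at depth $K+1$ appearing in the other relevant partitions introduces only $\beta_{K+1}$ which we have just matched --- produces $\gamma_{K+1}(\mu_t)=\gamma_{K+1}(\mu_t')$, completing the induction. The main obstacle is verifying the uniqueness of the two rainbow partitions as the sole contributors of the level-$(K+1)$ Jacobi parameters at the critical orders $2K+3$ and $2K+4$; once this is pinned down, the rest is polynomial degree bookkeeping. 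A minor degenerate case, when some $\gamma_k(t)$ with $k\le K$ is identically zero (finite-support measures), causes the induction to terminate naturally without further argument.
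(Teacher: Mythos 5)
Your proposal is correct and follows essentially the same route as the paper's proof: induction on the level of the Jacobi parameters, the Accardi--Bo\.zejko formula isolating the unique ``rainbow'' partitions that carry the new parameters $\beta_{K+1}$ and $\gamma_{K+1}$, cumulant axioms (a) and (c) to show the moment difference has degree at most one in $t$, and the degree count using $\gamma_0(t)=\gamma_0 t$ together with the non-constancy of $\gamma_1(t)$. The only difference is that you spell out explicitly the uniqueness of the rainbow partitions and the degenerate finite-support case ($\gamma_k(t)\equiv 0$), which the paper leaves implicit.
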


\begin{proof}
We will prove by induction on $n$ that the rest of the Jacobi parameters of the measure $\mu_t$ are the same as for $\mu_t'$. Suppose this is true up to $n-1$. Then using formula~\eqref{accardiozejko},
\[
s_{2n+1}(\mu_t) = s_{2n+1}(\mu_t') + (\beta_n(\mu_t) - \beta_n(\mu_t')) \gamma_{n-1}(\mu_t) \ldots \gamma_1(\mu_t) \gamma_0(\mu_t),
\]
so if
\[
s_k = r^\star_k + Q_k(r^\star_1, r^\star_2, \ldots, r^\star_{k-1}),
\]
then
\[
r^\star_{2n+1}(\mu_t) = r^\star_{2n+1}(\mu_t') + (\beta_{n}(\mu_t) - \beta_{n}(\mu_t')) \gamma_{n-1}(\mu_t) \ldots \gamma_1(\mu_t) \gamma_0(\mu_t).
\]
By Lemma~\ref{Lemma:Initial-cumulants}(b), $\gamma_0(\mu_t) = \gamma_0 t$, and $\gamma_1(\mu_t)$ has degree at least one in $t$, while the other $\gamma_i(\mu_t)$ are polynomial in $t$, from which it follows that $\beta_n(\mu_t) = \beta_n(\mu_t')$.

Similarly, from
\[
s_{2n+2}(\mu_t) = s_{2n+2}(\mu_t') + (\gamma_{n}(\mu_t) - \gamma_{n}(\mu_t')) \gamma_{n-1}(\mu_t) \ldots \gamma_1(\mu_t) \gamma_0(\mu_t),
\]
it follows that $\gamma_{n}(\mu_t) = \gamma_{n}(\mu_t)$.
\end{proof}

\begin{Cor}
\label{Cor:Meixner}
The Meixner distributions are the only convolution semigroups whose Jacobi parameters are polynomial in the convolution parameter.
\end{Cor}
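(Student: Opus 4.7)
The plan is to combine Lemma~\ref{Lemma:Initial-cumulants} and Proposition~\ref{Prop:General-Meixner}. First, I would verify that the classical cumulants satisfy the three axioms preceding the Lemma: (a) is the linearizing identity \eqref{Linearizing}, (b) is homogeneity of $C_\mu$, and (c) is the classical moment-cumulant formula \eqref{Classical-moment-cumulant}. Expanding \eqref{Classical-moment-cumulant} for $m \leq 4$ gives
\[
a_{11} = 1, \quad a_{21} = 3, \quad a_{111} = 1, \quad a_{31} = 4, \quad a_{22} = 3, \quad a_{211} = 6, \quad a_{1111} = 1,
\]
and a direct check confirms the relations \eqref{Coefficients}, since $a_{21}^2 - 2 a_{21} + 3 = 9 - 6 + 3 = 6 = a_{211}$.

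For any classical convolution semigroup $\{\mu_t\}$ with all Jacobi parameters polynomial in $t$, Lemma~\ref{Lemma:Initial-cumulants} then produces $\beta_0(t), \gamma_0(t), \beta_1(t), \gamma_1(t)$ as specific linear polynomials in $t$. Substituting the computed coefficients ($a_{21}-2=1$, $a_{31}-2a_{21}+2=0$, $a_{22}-1=2$) matches them exactly with the first four Jacobi parameters of a Meixner semigroup $\{\mu_t'\}$ with some parameters $(\beta_0, \gamma_0, b, c)$. Setting aside the degenerate case $\gamma_0 = 0$, in which $\mu = \delta_{\beta_0}$ is already Meixner, I would assume $\gamma_0 > 0$, so that $\gamma_1(\mu_t)$ is genuinely nonconstant in $t$. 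Proposition~\ref{Prop:General-Meixner} applied to the pair $\{\mu_t\}$ and $\{\mu_t'\}$ then forces their moments to coincide for every $t$; since Meixner distributions are moment-determinate, $\mu_t = \mu_t'$ lies in the Meixner class.

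The main obstacle is to ensure that the candidate $\{\mu_t'\}$ genuinely exists as a semigroup of probability measures. When $c \geq 0$ this follows immediately from $\ast$-infinite divisibility. When $c < 0$, positivity $\gamma_1 \geq 0$ forces $c \geq -\gamma_0$, and the Definition further requires $c = -\gamma_0/N$ for some $N \in \mathbb{N}$. To rule out all other negative values of $c$, I would rerun the inductive engine from the proof of Proposition~\ref{Prop:General-Meixner} directly on $\{\mu_t\}$ alone: via \eqref{accardiozejko}, isolate $\beta_n(t)\gamma_0(t)\cdots\gamma_{n-1}(t)$ and $\gamma_n(t)\gamma_0(t)\cdots\gamma_{n-1}(t)$ as the leading contributions to $s_{2n+1}(\mu_t)$ and $s_{2n+2}(\mu_t)$, and compare against the degree bound from $s_m(\mu_t) = t r^\ast_m + Q_m(t r^\ast_1, \ldots, t r^\ast_{m-1})$. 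Since by induction $\gamma_0(t)\cdots\gamma_{n-1}(t)$ is polynomial of degree $n$ with leading coefficient $n!\,\gamma_0^n$, the degree mismatch pins $\beta_n(t)$ and $\gamma_n(t)$ down to the Meixner form $nb + \beta_0 t$ and $(n+1)(nc + \gamma_0 t)$. Positivity $\gamma_n(1) \geq 0$ for every $n$ then forces either $c \geq 0$ or termination of the sequence at some integer $N$ with $c = -\gamma_0/N$, recovering the existence of the candidate semigroup and completing the argument.
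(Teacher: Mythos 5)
Your first two paragraphs follow the paper's own route: check that the classical cumulants satisfy \eqref{Coefficients}, use Lemma~\ref{Lemma:Initial-cumulants} to identify $\beta_0(t),\gamma_0(t),\beta_1(t),\gamma_1(t)$ with the initial Jacobi parameters of a Meixner family, dispose of $\gamma_0=0$, and invoke the uniqueness of Proposition~\ref{Prop:General-Meixner}. (The appeal to moment-determinacy is unnecessary: the Meixner class is defined by its Jacobi parameters, and equal moments already give equal Jacobi parameters; but this is harmless.) The genuine problem is in your last paragraph, the case $c<0$ with $-\gamma_0/c\notin\mathbb{N}$, where you must rule out the existence of a comparison semigroup. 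Your plan is to ``rerun the inductive engine of Proposition~\ref{Prop:General-Meixner} on $\{\mu_t\}$ alone'' and claim that a degree count pins $\beta_n(t)$ and $\gamma_n(t)$ down to the exact Meixner form $nb+\beta_0 t$ and $(n+1)(nc+\gamma_0 t)$. That engine is intrinsically comparative: it works because the \emph{difference} of the cumulants of two families is linear in $t$, so that a discrepancy in $\beta_n$ or $\gamma_n$, multiplied by $\gamma_0(t)\cdots\gamma_{n-1}(t)$ (degree $\geq 2$), is visible. Run on one family alone, formula \eqref{accardiozejko} together with \eqref{Classical-moment-cumulant} only gives
\[
\beta_n(t)\,\gamma_0(t)\cdots\gamma_{n-1}(t) \;=\; t\,r^\ast_{2n+1} \;+\; H_n(t),
\]
where $r^\ast_{2n+1}$ is an \emph{unknown} cumulant of $\mu$ and $H_n$ is a polynomial built from the lower data. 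A degree comparison here bounds $\deg\beta_n(t)$ by roughly $n+1$; it does not produce the coefficients $nb+\beta_0 t$. To extract the Meixner form you must know that the Meixner choice of $\beta_n(t)$ and $r^\ast_{2n+1}$ actually solves this equation, i.e.\ that the Meixner Jacobi parameters with this (inadmissible) value of $c$ still generate a family whose cumulants are linear in $t$ --- which is precisely the existence of the comparison object you set out to avoid. Alternatively you would have to compute $H_n(t)$ explicitly, a nontrivial combinatorial identity you have not supplied; note that the analogous direct induction in the free case (Theorem~\ref{Thm:Linear-one-state}) relies on the special formula \eqref{freerm} relating free cumulants to Jacobi parameters, and the paper stresses that no such formula is available classically.

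The paper closes this case differently, and more cheaply: for bad $c$ the Meixner Jacobi parameters still define, via \eqref{accardiozejko}, a family of linear functionals on polynomials (no longer positive) with cumulants linear in $t$, and the uniqueness argument of Proposition~\ref{Prop:General-Meixner} applies verbatim to such functionals; a genuine semigroup of measures with these initial Jacobi parameters would then share the moments of a non-positive functional, which is impossible. If you replace your ``direct degree count'' by this formal-functional comparison (or supply the explicit identification of $H_n$), your argument closes; as written, the key step asserts the conclusion rather than proving it.
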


\begin{proof}
If $\gamma_0 = 0$, the measure is a point mass, and so belongs to the Meixner class. So suppose $\gamma_0 \neq 0$. For the usual cumulants, $a_{21} = 3$, $a_{211} = 6$, $a_{22} = 3$, $a_{31} = 4$, so condition \eqref{Coefficients} is satisfied. Also, $\beta_1(t) = b + \beta_0 t$, and $\gamma_1(t) = 2 c + 2 \gamma_0 t$ is not constant. If $c \geq 0$, $t \geq 0$, or $c < 0$, $N = -\gamma_0/c \in \mf{N}$, $t \in \mf{N}$, the Meixner distribution with these initial values of the Jacobi parameters has
\[
J(\mu^{\ast t}) =
\begin{pmatrix}
\beta_0 t, & b + \beta_0 t, & 2 b + \beta_0 t, & 3 b + \beta_0 t, & \ldots \\
\gamma_0 t, & 2(c + \gamma_0 t), & 3 (2 c + \gamma_0 t), & 4 (3 c + \gamma_0 t), & \ldots
\end{pmatrix}
\]
Finally, suppose $c < 0$, $-\gamma_0/c \not \in \mf{N}$. Jacobi parameters in the preceding equation still define, via relation~\eqref{accardiozejko}, a linear functional on polynomials, which however is no longer positive. The uniqueness result in Proposition~\ref{Prop:General-Meixner} applies equally well to such functionals. Therefore, there is no positive linear functional (and so no measure) with these initial Jacobi parameters whose Jacobi parameters are polynomial in $t$.
\end{proof}

\begin{Example}
For the free cumulants, $a_{21} = 3$, $a_{211} = 6$, $a_{22} = 2$, $a_{31} = 4$, so condition \eqref{Coefficients} is satisfied. In this case, $\beta_1(t) = b + \beta_0 t$, and $\gamma_1(t) = c + \gamma_0 t$ is not constant. Therefore we re-prove our result that the free Meixner distributions are the only free convolution semigroups whose Jacobi parameters are polynomial in the convolution parameter.
\end{Example}

\begin{Example}
\label{Example:Boolean}
For the Boolean cumulants, $a_{21} = 2$, $a_{211} = 3$, $a_{22} = 1$, $a_{31} = 2$. So condition \eqref{Coefficients} is still satisfied, but $\gamma_1(t) = c$ is independent of $t$. Therefore Proposition~\ref{Prop:General-Meixner} does not apply. In fact, for \emph{any} Boolean convolution semigroup, the Jacobi parameters are
\[
J(\mu^{\uplus t}) =
\begin{pmatrix}
\beta_0 t, & \beta_1, & \beta_2, & \ldots \\
\gamma_0 t, & \gamma_1, & \gamma_2, & \ldots
\end{pmatrix}
\]
and so are polynomial in $t$, see \cite{Boz-Wys} and \cite{AnsBoolean}. See Proposition~\ref{Prop:Boolean} for a generalization of this result.
\end{Example}

\begin{Example}
For the monotone cumulants \cite{Muraki-Brownian-motion,Hasebe-Saigo-Monotone-cumulants}, $a_{11} = 1$, $a_{21} = \frac{5}{2}$, $a_{211} = \frac{13}{3}$. So Proposition~\ref{Prop:General-Meixner} applies, but condition \eqref{Coefficients} is not satisfied. In fact, in this case it is not clear if we have a $4$-parameter family of measures with linear Jacobi parameters; for example, this condition implies that the mean $\beta_0 = 0$. On the other hand, it may also be possible to have a monotone convolution semigroup with Jacobi parameters polynomial in $t$ of degree greater than $1$.
\end{Example}

\begin{Cor}
\label{Cor:Jacobi}
The only convolution semigroups $\set{\mu_t}$ which are spectral measures of tridiagonal matrices in equation~\eqref{Jacobi-matrix} are Meixner families.
\end{Cor}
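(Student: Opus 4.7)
The plan is to reduce Corollary~\ref{Cor:Jacobi} directly to Corollary~\ref{Cor:Meixner}, using the standard observation that a tridiagonal matrix with nonzero off-diagonal entries can be conjugated by a diagonal similarity to the canonical Jacobi form with sub-diagonal equal to $1$.

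Concretely, I would set $d_0(t) = 1$ and inductively $d_{n+1}(t) = (t\alpha_n + a_n)\, d_n(t)$, and conjugate the matrix $T(t) = tA + B$ in \eqref{Jacobi-matrix} by the diagonal operator $D(t) = \mathrm{diag}(d_0(t), d_1(t), \ldots)$. The diagonal entries are unchanged, and the new super-diagonal becomes the product of the original sub- and super-diagonals. Hence the spectral measure $\mu_t$ of $T(t)$ at the cyclic vector $e_0$ has Jacobi parameters
\[
\beta_n(\mu_t) = t\beta_n + b_n, \qquad \gamma_n(\mu_t) = (t\alpha_n + a_n)(t\gamma_n + c_n),
\]
which are polynomials in $t$ of degree $1$ and at most $2$, respectively.

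Since $\set{\mu_t}$ is by hypothesis a classical convolution semigroup with Jacobi parameters polynomial in $t$, Corollary~\ref{Cor:Meixner} applies and forces $\set{\mu_t}$ to be a Meixner family. Comparing the explicit Meixner form $\gamma_n(t) = (n+1)(nc + \gamma_0 t)$, which is linear in $t$, against our expression $(t\alpha_n + a_n)(t\gamma_n + c_n)$, which is a priori quadratic, shows as a by-product that the leading coefficient $\alpha_n \gamma_n$ must vanish for every $n$; in other words, the matrix \eqref{Jacobi-matrix} is no more general than a standard Jacobi matrix with parameters linear in $t$.

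The only subtlety I expect to handle is the degenerate case in which $t\alpha_n + a_n = 0$ for some $n$ and some $t$ in the semigroup, which would make the diagonal similarity fail to be invertible; but in that situation the corresponding $\gamma_n(\mu_t)$ also vanishes, so $\mu_t$ has finite support and the Jacobi sequence terminates at index $n$, and the same argument applies to the resulting finite Jacobi matrix. Apart from this bookkeeping, the corollary is essentially immediate from Corollary~\ref{Cor:Meixner}.
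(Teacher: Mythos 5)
Your proposal is correct and follows essentially the same route as the paper: the diagonal similarity you perform is exactly the standard passage from the recursion for general orthogonal polynomials to the monic one, yielding Jacobi parameters $\beta_n(\mu_t)=t\beta_n+b_n$ and $\gamma_n(\mu_t)=(t\alpha_n+a_n)(t\gamma_n+c_n)$, after which Corollary~\ref{Cor:Meixner} applies and gives $\alpha_n\gamma_n=0$ a posteriori, just as in the paper. Your extra remark about the degenerate case $t\alpha_n+a_n=0$ (finite Jacobi sequence) is harmless bookkeeping that the paper leaves implicit.
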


\begin{proof}
$\mu_t$ is a spectral measure of a tridiagonal matrix in equation~\eqref{Jacobi-matrix} if and only if there are polynomials $\set{Q_n}$ orthogonal with respect to $\mu_t$ and satisfying the recursion relation
\[
x P_n(x) = (\alpha_n t + a_n) P_{n+1}(x) + (\beta_n t + b_n) P_n(x) + (\gamma_{n-1} t + c_{n-1}) P_{n-1}(x).
\]
It is well known that the corresponding monic orthogonal polynomials then satisfy the recursion
\[
x \hat{P}_n(x) = \hat{P}_{n+1}(x) + (\beta_n t + b_n) \hat{P}_n(x) + (\alpha_{n-1} t + a_{n-1}) (\gamma_{n-1} t + c_{n-1}) \hat{P}_{n-1}(x).
\]
By Corollary~\ref{Cor:Meixner}, $\mu_t$ is a Meixner distribution. A posteriori, for each $n$, $\alpha_{n-1} \gamma_{n-1} = 0$.
\end{proof}

\section{The two-state free convolution.}

Suppose that in addition to the measure $\mu$, we also have a measure $\widetilde{\mu}$, with moments $\widetilde{s}_m$
and Jacobi parameters $\widetilde{\gamma}_m,$ $\widetilde{\beta}_m$.
Recall from Section~\ref{Subsec:cumulants} that the \textit{conditionally free cumulants}
$R_m=R_{m}(\widetilde{\mu},\mu)$ of the pair $(\widetilde{\mu},\mu)$
are defined by
\begin{equation}
\widetilde{s}_m=
\sum_{\pi\in\mathrm{NC}(m)}\prod_{U\in\mathrm{Out}(\pi)}R_{|U|}(\widetilde{\mu},\mu)
\prod_{U\in\mathrm{Inn}(\pi)}r_{|U|}(\mu),
\end{equation}
where $r_m(\mu)$ are the free cumulants of $\mu$.

For $\sigma\in\mathrm{NC}_{1,2}(X)$ and $V\in\sigma$, with label $k$, we define
\begin{equation}
\label{w-conditionally}
\widetilde{w}(V,k,\sigma):=
\left\{\begin{array}{ll}
\phantom{}\widetilde{\beta}_k-\beta_{k-1}\phantom{ee}
&\mbox{if $|V|=1$ and $k=d(V,\sigma)$,}\\
\phantom{}\beta_k-\beta_{k-1}&\mbox{if $|V|=1$ and $k<d(V,\sigma)$,}\\
\phantom{}\widetilde{\gamma}_k-\gamma_{k-1}&\mbox{if $|V|=2$ and $k=d(V,\sigma)$,}\\
\phantom{}\gamma_k-\gamma_{k-1}&\mbox{if $|V|=2$ and $k<d(V,\sigma)$,}
\end{array}\right.
\end{equation}
keeping our convention that $\beta_{-1}=\gamma_{-1}=0$.
For $(\sigma,\kappa)\in\mathrm{NCL}_{1,2}(X)$ we put
\begin{equation}\label{cfreerm}
\widetilde{w}(\sigma,\kappa):=\prod_{V\in\sigma}\widetilde{w}(V,\kappa(V),\sigma).
\end{equation}
Then, in view of \cite{Mlotkowski-Cumulants-Jacobi}, for every $m\ge1$ we have
\begin{equation}\label{cfreerm2}
R_m(\widetilde{\mu},\mu)=\sum_{(\sigma,\kappa)\in\mathrm{NCL}_{1,2}^{1}(m)}
\widetilde{w}(\sigma,\kappa).
\end{equation}
For example:
\begin{align}
R_1&=\widetilde{\beta}_0,\label{cfreer1}\\
R_2&=\widetilde{\gamma}_0,\label{cfreer2}\\
R_3&=\widetilde{\gamma}_0\big(\widetilde{\beta}_1-\beta_0\big),\label{cfreer3}\\
R_4&=\widetilde{\gamma}_0\big[\big(\widetilde{\beta}_1-\beta_0\big)^2
+\big(\widetilde{\gamma}_1-\gamma_0\big)\big],\label{cfreer4}\\
R_5&=\widetilde{\gamma}_0\big[\big(\widetilde{\beta}_1-\beta_0\big)^3
+2\big(\widetilde{\gamma}_1-\gamma_0\big)\big(\widetilde{\beta}_1-\beta_0\big)
+\big(\widetilde{\gamma}_1-\gamma_0\big)\big(\beta_1-\beta_0\big)\nonumber\\
&\;\;\;\;+\widetilde{\gamma}_1\big(\widetilde{\beta}_2-\beta_1\big)
\big],\label{cfreer5}\\
R_6&=\widetilde{\gamma}_0\big[\big(\widetilde{\beta}_1-\beta_0\big)^4
+3\big(\widetilde{\gamma}_1-\gamma_0\big)
\big(\widetilde{\beta}_1-\beta_0\big)^2
+2\big(\widetilde{\gamma}_1-\gamma_0\big)\big(\widetilde{\beta}_1-\beta_0\big)
\big(\beta_1-\beta_0\big)\nonumber\\
&\;\;\;\;+\big(\widetilde{\gamma}_1-\gamma_0\big)\big(\beta_1-\beta_0\big)^2
+2\widetilde{\gamma}_1\big(\widetilde{\beta}_2-\beta_1\big)
\big(\widetilde{\beta}_1-\beta_0\big)
+2\widetilde{\gamma}_1\big(\widetilde{\beta}_2-\beta_1\big)
\big(\beta_1-\beta_0\big)\nonumber\\
&\;\;\;\;+\widetilde{\gamma}_1\big(\widetilde{\beta}_2-\beta_1\big)^2
+\big(\widetilde{\gamma}_1-\gamma_0\big)^2
+\big(\widetilde{\gamma}_1-\gamma_0\big)\big({\gamma}_1-\gamma_0\big)
+\widetilde{\gamma}_1\big(\widetilde{\gamma}_2-\gamma_1\big)
\big].\label{cfreer6}
\end{align}

The conditionally free power of a pair of measures:
$(\widetilde{\mu},\mu)^{\boxplus_c t}=(\widetilde{\mu}_t,\mu_t)$
is defined by: $\mu_t=\mu^{\boxplus t}$ and
$R_m(\widetilde{\mu}_t,\mu_t)=t\cdot R_m(\widetilde{\mu},\mu)$.

\begin{Thm}
\label{Thm:Linear-two-state}
Let $(\widetilde{\mu},\mu)$ be a pair of measures
with Jacobi parameters $\widetilde{\beta}_m,\widetilde{\gamma}_m$ and $\beta_m,\gamma_m$ respectively.

Assume that neither $\widetilde{\mu}$ nor $\mu$ is a point mass and that for the conditionally free powers
$\left(\widetilde{\mu}_t,\mu_t\right):=\left(\widetilde{\mu},\mu\right)^{\boxplus_c t}$, $t\in\mathbb{N}$,
all the Jacobi parameters of $\widetilde{\mu}_t$ are polynomials on $t$.
Then
\begin{align}
\beta_1&=\widetilde{\beta}_2=\beta_2=\widetilde{\beta}_3=\beta_3=\widetilde{\beta}_4=\ldots,
\label{cfreelinear1}\\
\gamma_1&=\widetilde{\gamma}_2=\gamma_2=\widetilde{\gamma}_3=\gamma_3=\widetilde{\gamma}_4=\ldots,
\label{cfreelinear2}
\end{align}
so that $\widetilde{\mu}$ is a general measure whose Jacobi parameters do not depend on $n$ for $n \geq 2$, and $\mu$ is the corresponding free Meixner distribution.

On the other hand, if (\ref{cfreelinear1}) and (\ref{cfreelinear2}) hold then,
putting
\begin{equation}
\label{Notation-tilde}
\widetilde{b}=\widetilde{\beta}_1-\beta_0, \ b=\beta_1-\beta_0, \ \widetilde{c}=\widetilde{\gamma}_1-\gamma_0, \ c=\gamma_1-\gamma_0,
\end{equation}
the conditionally free power
$\left(\widetilde{\mu}_t,\mu_t\right):=\left(\widetilde{\mu},\mu\right)^{\boxplus_c t}$
exists for $t\ge0$, ${c}+\gamma_0 t\ge0$, $\widetilde{c} + \gamma_0 t\ge0$
and we have
\begin{equation}
\label{General-mu}
J(\widetilde{\mu}_t)=\left(\begin{array}{ccccc}
\widetilde{\beta}_0 t,&\widetilde{b} + \beta_0 t,& b + \beta_0 t,& b + \beta_0 t,&\dots\\
\widetilde{\gamma}_0 t,&\widetilde{c} + \gamma_0 t,&c + \gamma_0 t,& c + \gamma_0 t,&\dots
\end{array}
\right)
\end{equation}
and
\begin{equation}
\label{General-nu}
J({\mu}_t)=\left(\begin{array}{ccccc}
\beta_0 t,& b + \beta_0 t,& b + \beta_0 t,& b + \beta_0 t,&\dots\\
\gamma_0 t,& c + \gamma_0 t,& c+ \gamma_0 t,& c + \gamma_0 t,&\dots
\end{array}
\right).
\end{equation}
In particular, the pair $(\widetilde{\mu},\mu)$ is $\boxplus_c$-infinitely divisible if and only if $c \geq 0$ and $\widetilde{c} \geq 0$. In this case,
\begin{equation}\label{cfreelinearr}
R^{\widetilde{\mu},\mu}(z)=\widetilde{\beta}_0 z+\widetilde{\gamma}_0 z^2\int_{\mathbb{R}} \frac{d\widetilde{\rho}(x)}{1-xz},
\end{equation}
where $\widetilde{\rho}$ is the free Meixner probability measure which satisfies
\begin{equation}\label{rhotilde}
J(\widetilde{\rho})=\left(\begin{array}{ccccc}
\widetilde{b},& b,& b,& b,&\dots\\
\widetilde{c},& c,& c,& c,&\dots
\end{array}
\right).
\end{equation}
\end{Thm}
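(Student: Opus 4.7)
The proof mirrors the strategy of Theorem~\ref{Thm:Linear-one-state}, but I must simultaneously track the Jacobi parameters of $\widetilde{\mu}_t$ and of $\mu_t = \mu^{\boxplus t}$. Equations \eqref{cfreer1}--\eqref{cfreer2} immediately yield $\widetilde{\beta}_0(t) = \widetilde{\beta}_0 t$ and $\widetilde{\gamma}_0(t) = \widetilde{\gamma}_0 t$, while $r_1(\mu_t) = t\beta_0$ and $r_2(\mu_t) = t\gamma_0$ give $\beta_0(t) = \beta_0 t$ and $\gamma_0(t) = \gamma_0 t$. Dividing \eqref{cfreer3}--\eqref{cfreer4} at time $t$ by $\widetilde{\gamma}_0 t > 0$ produces $\widetilde{\beta}_1(t) = \beta_0 t + \widetilde{b}$ and $\widetilde{\gamma}_1(t) = \gamma_0 t + \widetilde{c}$; applying the one-state identities \eqref{freer3}--\eqref{freer4} to $\mu_t$ yields $\beta_1(t) = \beta_0 t + b$ and $\gamma_1(t) = \gamma_0 t + c$ with no polynomial hypothesis required.

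For the inductive step at depth $d \geq 2$, assume these forms for $\widetilde{\beta}_k(t), \widetilde{\gamma}_k(t)$ with $k \leq d-1$ and for $\beta_k(t), \gamma_k(t)$ with $k \leq d-2$; then $\widetilde{w}_t(V, k, \sigma)$ vanishes whenever $2 \leq k \leq d-1$. In the expansion of $R_{2d+1} t$ via \eqref{cfreerm2}, I single out the chain partition $\sigma_1 = \{\{1,2d+1\},\{2,2d\},\ldots,\{d,d+2\},\{d+1\}\}$ with labelings $\mathcal{K}_1 = \{\kappa : \kappa(\{d+1\}) = d\}$, as in the one-state proof. Summing $\kappa(V) \in \{0,1\}$ over the intermediate blocks telescopes: the depth-$1$ block contributes $\widetilde{\gamma}_1(t)$ and each depth-$\ell$ block for $2 \leq \ell \leq d-1$ contributes $\gamma_1(t)$, so
\[
\sum_{\kappa \in \mathcal{K}_1} \widetilde{w}_t(\sigma_1, \kappa) = \widetilde{\gamma}_0 t \cdot \bigl(\widetilde{\beta}_d(t) - \beta_{d-1}(t)\bigr) \cdot \widetilde{\gamma}_1(t) \, \gamma_1(t)^{d-2},
\]
while every remaining contribution to $R_{2d+1} t$ is a degree-$1$ polynomial in $t$. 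The main obstacle is that, unlike the one-state setting, we do not assume $\beta_{d-1}(t)$ is polynomial; to resolve this I combine the above identity with the parallel one-state analysis of $r_{2d-1}(\mu_t) = t r_{2d-1}(\mu)$, which supplies $\beta_{d-1}(t) - \beta_{d-2}(t) = C/\gamma_1(t)^{d-2}$ for some constant $C$. The two rational expressions have denominators of degree $d$ and $d-2$ respectively; using the assumed polynomiality of $\widetilde{\beta}_d(t)$ and comparing orders at infinity forces $C = 0$ together with the vanishing of the corresponding two-state constant, yielding $\widetilde{\beta}_d(t) = \beta_{d-1}(t) = \beta_0 t + b$. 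The parallel argument with $R_{2d+2}$ and $r_{2d}$ delivers $\widetilde{\gamma}_d(t) = \gamma_{d-1}(t) = \gamma_0 t + c$, completing the induction and hence \eqref{cfreelinear1}--\eqref{cfreelinear2}.

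For the converse direction, suppose \eqref{cfreelinear1}--\eqref{cfreelinear2} hold. Then $\mu$ is free Meixner and Theorem~\ref{Thm:Linear-one-state} supplies $\mu_t = \mu^{\boxplus t}$ with Jacobi parameters \eqref{General-nu} whenever $t \geq 0$ and $c + \gamma_0 t \geq 0$. Defining $\widetilde{\mu}_t$ via \eqref{General-mu}, valid when $\widetilde{c} + \gamma_0 t \geq 0$, I verify $R_m(\widetilde{\mu}_t, \mu_t) = t \cdot R_m(\widetilde{\mu}, \mu)$ through \eqref{cfreerm2}: the constancy conditions annihilate all weights with $k \geq 2$, so only labelings with $\kappa(V) = 1$ on every non-outer block survive (connectivity rules out $\kappa(V) = 0$ on leaves). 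The surviving sum organizes into the Accardi--Bo\.zejko expansion of the moments $s_m(\widetilde{\rho})$ of the free Meixner measure $\widetilde{\rho}$ with Jacobi parameters \eqref{rhotilde}, yielding $R_{m+2}(\widetilde{\mu}_t, \mu_t) = \widetilde{\gamma}_0 t \cdot s_m(\widetilde{\rho}) = t \cdot R_{m+2}(\widetilde{\mu}, \mu)$ and thus the generating-function identity \eqref{cfreelinearr}. Infinite divisibility is equivalent to nonnegativity of all Jacobi parameters for every $t \geq 0$, which amounts precisely to $c \geq 0$ and $\widetilde{c} \geq 0$.
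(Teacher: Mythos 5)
Your proposal follows essentially the same route as the paper: the same combinatorial formulas \eqref{freerm} and \eqref{cfreerm2}, the same extraction of the chain partition with its deepest block carrying the maximal label, the same two-step interplay in which the one-state relation $r_{2d-1}(\mu_t)=t\,r_{2d-1}(\mu)$ produces a constant and the two-state relation together with polynomiality of the $\widetilde{\mu}_t$-parameters forces that constant and the new difference to vanish, and the identical converse and infinite-divisibility discussion. One enumeration step, however, needs to be made precise. When you assert that every contribution to $R_{2d+1}t$ outside $\mathcal{K}_1$ is a degree-one polynomial in $t$, you omit the labelings of the same chain $\sigma_1$ in which the deepest singleton carries the label $d-1$ and the depth-one pair the label $1$ (these are exactly the paper's $\mathcal{L}$-type classes): their total weight is
\begin{equation*}
\widetilde{\gamma}_0\, t\ \widetilde{c}\ \bigl(\beta_{d-1}(t)-\beta_{d-2}(t)\bigr)\,\gamma_1(t)^{d-2},
\end{equation*}
which contains precisely the quantity you have not assumed to be polynomial, so at the point where you make the claim it is unjustified. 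This family becomes linear in $t$, equal to $\widetilde{\gamma}_0\,\widetilde{c}\,C\,t$, only after the one-state identity $\bigl(\beta_{d-1}(t)-\beta_{d-2}(t)\bigr)\gamma_1(t)^{d-2}=C$ is in hand; so that identity must be established first and this family carried along explicitly before you may divide out $\widetilde{\gamma}_0 t$ and speak of ``the corresponding two-state constant''. Once this is done, your comparison of orders at infinity is exactly the paper's degree count (the term containing $\widetilde{\beta}_d(t)-\beta_1(t)$ has degree at least $d$, the term containing $C$ has degree two, and the rest is linear), and the induction closes as in the paper. A cosmetic point: ``forces $C=0$'' is false at $d=2$, where $C=\beta_1(t)-\beta_0(t)=b$; it is harmless because $\beta_1(t)$ is already known from the initial step and the same comparison still yields $\widetilde{\beta}_2(t)=\beta_1(t)$ --- this is the base case the paper treats separately via $R_5$ and $R_6$ before starting its induction. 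The analogous remarks apply verbatim to your $\gamma$-step with $r_{2d}$ and $R_{2d+2}$.
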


\begin{Remark}
Note that we did not need to assume that the Jacobi parameters of ${\mu}_t$ are polynomials in $t$; rather, this fact is implied by the hypothesis of the theorem. If $\widetilde{\mu}$ is a point mass, the conclusion of the theorem holds if we also suppose that the Jacobi parameters of $\mu_t$ are polynomials in $t$. This follows from Theorem~\ref{Thm:Linear-one-state} and the fact that for any free convolution semigroup $\set{\mu_t}$, the family
\[
\set{(\widetilde{\mu}_t =\delta_{\widetilde{\beta}_0 t}, \mu_t)}
\]
form a two-state free convolution semigroup (with $R^{\widetilde{\mu},\mu}(z) = \widetilde{\beta}_0 z$).

If $\mu$ is a point mass, the conclusion of the theorem is false, see Example~\ref{Example:Boolean}. Proposition~\ref{Prop:Boolean} provides a complete description of this case.
\end{Remark}

\begin{Defn}
\emph{Two-state free Meixner distributions} are pairs of measures $(\widetilde{\mu}, \mu)$ with Jacobi parameters \eqref{General-mu} and \eqref{General-nu} for $t = 1$ and
\[
\gamma_0 > 0, \quad \widetilde{\gamma}_0 \geq 0, \quad c + \gamma_0 \ge 0, \quad \widetilde{c} + \gamma_0 \ge 0.
\]
\end{Defn}

\begin{Remark}
An explicit formula for $\widetilde{\mu}_t$ can be obtained from the continued fraction expansion of its Cauchy transform:
\[
G_{\widetilde{\mu}_t}(z) =
\cfrac{1}{z - \widetilde{\beta}_0 t -
\cfrac{\widetilde{\gamma}_0 t}{z - \beta_0 t - \widetilde{b} -
(\gamma_0 t + \widetilde{c})G_{\rho_t}(z)}},
\]
where $\rho_t$ is the semicircular distribution with mean $\beta_0 t + b$ and variance $\gamma_0 t + c$. The corresponding measure belongs to the Bernstein-Szeg{\H o} class, and has the form
\[
\widetilde{\mu}_t = \frac{\sqrt{4 (\gamma_0 t + c) - (x - \beta_0 t - b)^2}}{\text{cubic polynomial}} \,dx + \text{ at most $3$ atoms}.
\]
\end{Remark}

\begin{proof}[Proof of Theorem~\ref{Thm:Linear-two-state}]
Denote by $\widetilde{\beta}_m(t)$, $\widetilde{\gamma}_{m}(t)$
and $\beta_m(t)$, $\gamma_{m}(t)$ the Jacobi parameters of $\widetilde{\mu}_t$
and $\mu_t$ respectively.
Putting in formulas (\ref{freerm})--(\ref{freer6})
and (\ref{cfreerm})--(\ref{cfreer6})
\[
r_m t, \quad R_m t, \quad \beta_{m}(t), \quad \gamma_m(t), \quad \widetilde{\beta}_m(t), \quad\widetilde{\gamma}_m(t), \quad w_t, \quad \widetilde{w}_t
\]
instead of
\[
r_m, \quad R_m, \quad \beta_m, \quad \gamma_m, \quad \widetilde{\beta}_m, \quad \widetilde{\gamma}_m, \quad w, \quad \widetilde{w}
\]
respectively we see
that
$$
\beta_0(t)=\beta_0 t,\quad\gamma_0(t)=\gamma_0 t,\quad
\beta_1(t)=\beta_1-\beta_0+\beta_0 t,\quad
\gamma_1(t)=\gamma_1-\gamma_0+\gamma_0 t,
$$
by (\ref{freer1})--(\ref{freer4}), and
$$
\widetilde{\beta_0}(t)=\widetilde{\beta}_0 t,\quad
\widetilde{\gamma_0}(t)=\widetilde{\gamma}_0 t,\quad
\widetilde{\beta}_1(t)=\widetilde{\beta}_1-\beta_0+\beta_0 t,\quad
\widetilde{\gamma}_1(t)=\widetilde{\gamma}_1-\gamma_0+\gamma_0 t,
$$
from (\ref{cfreer1})--(\ref{cfreer4}).
Now assume that neither $\widetilde{\mu}$ nor $\mu$ is a point mass
(i.e. $\widetilde{\gamma}_0>0$, $\gamma_0>0$). It then follows from these formulas that $\gamma_0(t)$, $\gamma_1(t)$, $\widetilde{\gamma}_0(t)$, and $\widetilde{\gamma}_1(t)$ are all polynomials of degree one in $t$.

Assume that all $\widetilde{\beta}_m(t)$ and $\widetilde{\gamma}_m(t)$
are polynomials on $t$.
If we apply the last formulas to (\ref{cfreer5}) then we get
$$
R_5 t=\mathrm{constant}\cdot t+\widetilde{\gamma}_0(t)\widetilde{\gamma}_1(t)
\big(\widetilde{\beta}_2(t)-\beta_1(t)\big).
$$
Since, by assumption, $\widetilde{\beta}_2(t)$ is a polynomial, this implies that
\[
\widetilde{\beta}_2(t)=\beta_1(t)=\beta_1-\beta_0+\beta_0 t
\]
(for otherwise the right hand side would be a polynomial of degree at least 2).
Then in (\ref{cfreer6}) we obtain
$$
R_6 t=\mathrm{constant}\cdot t+\widetilde{\gamma}_0(t)\widetilde{\gamma}_1(t)
\big(\widetilde{\gamma}_2(t)-\gamma_1(t)\big),
$$
which, in turn, yields
\[
\widetilde{\gamma}_2(t)=\gamma_1(t)=\gamma_1-\gamma_0+\gamma_0 t.
\]

Now we are going to prove by induction that for every $n\ge1$:
\begin{align}
\widetilde{\beta}_{n+1}(t)&=\beta_n(t)=\beta_1-\beta_0+\beta_0 t,\label{indbeta}\\
\widetilde{\gamma}_{n+1}(t)&=\gamma_n(t)=\gamma_1-\gamma_0+\gamma_0 t.\label{indgamma}
\end{align}

Fix $d\ge2$ and suppose that (\ref{indbeta})--(\ref{indgamma})
hold for all $n$ such that $1\le n<d$.
Now we consider (\ref{freerm}) for $r_{2d+1}t$. Put
\begin{align*}
\sigma_1^{d}&:=\big\{\{1,2d+1\},\{2,2d\},\{3,2d-1\},\dots,\{d,d+2\},\{d+1\}\big\},\\
\mathcal{K}_1^{d}&:=
\{\kappa:(\sigma_1^{d},\kappa)\in\mathrm{NCL}_{1,2}^{1}(2d+1),\,\,\kappa(\{d+1\})=d\}.
\end{align*}
By our assumptions, if $1<k<d$ then $w_t(V,k)=0$ for any block $V$, with $1\le|V|\le2$.
Therefore the right hand side of (\ref{freerm}) for $r_{2d+1}t$ involves only such
$(\sigma,\kappa)\in\mathrm{NCL}_{1,2}^1(2d+1)$
that either
$\kappa(V)=1$ for every inner block $V\in\sigma$
(and then $w_t(V,\kappa(V))=w_t(V,1)=\beta_1-\beta_0$ if $|V|=1$
or $\gamma_1-\gamma_0$ if $|V|=2$) or
$\sigma=\sigma_{1}^{d}$, $\kappa\in\mathcal{K}_1^{d}$ and
$\kappa(V)\in\{0,1\}$ for all inner blocks $V\in\sigma$, $V\ne\{d+1\}$.
Accordingly we get
$$
r_{2d+1}t=
\gamma_0 t\!\!\!\!\sum_{\sigma\in\mathrm{NC}_{1,2}(2d-1)}
\prod_{\substack{V\in\sigma\\ |V|=1}}(\beta_1-\beta_0)\times\!\!\!\!\!
\prod_{\substack{V\in\sigma\\ |V|=2\\ V\ne\{1,2d+1\}}}(\gamma_1-\gamma_0)+
\sum_{\kappa\in\mathcal{K}_1^{d}}w_t(\sigma_1^{d},\kappa)
$$
and
\begin{align*}
\sum_{\kappa\in\mathcal{K}_1^{d}}w_t(\sigma_1^{d},\kappa)&=
\gamma_0 t \big(\beta_{d}(t)-\beta_{d-1}(t)\big)
\prod_{k=1}^{d-1}\sum_{i=0}^{1}\big(\gamma_i(t)-\gamma_{i-1}(t)\big)\\
&=\gamma_0 t\big(\beta_d(t)-\beta_{1}(t)\big)\gamma_1(t)^{d-1}.
\end{align*}
This implies that
\begin{equation}\label{c1}
\big(\beta_{d}(t)-\beta_{1}(t)\big)\gamma_1(t)^{d-1}
=c_1
\end{equation}
for some constant $c_1$.

Now we consider (\ref{cfreerm2}) for $R_{2d+3}t$.
Put
\begin{align*}
\sigma_1^{d+1}&:=\big\{\{1,2d+3\},\{2,2d+2\},\{3,2d+1\},\dots,\{d+1,d+3\},\{d+2\}\big\},\\
\mathcal{K}_1^{d+1}&:=
\left\{\kappa:(\sigma_1^{d+1},\kappa)\in\mathrm{NCL}_{1,2}^{1}(2d+3),
\,\,\kappa(\{d+2\})=d+1\right\},\\
\mathcal{L}_1^{d+1}&:=
\left\{\kappa:(\sigma_1^{d+1},\kappa)\in\mathrm{NCL}_{1,2}^{1}(2d+3),
\,\,\kappa(\{d+2\})=d\right\}.
\end{align*}
By our assumption, if $\widetilde{w}_t(\sigma,\kappa)\ne0$ then either
$\kappa(V)=1$ for every inner block $V\in\sigma$
or
$\sigma=\sigma_{1}^{d+1}$, $\kappa\in\mathcal{K}_{1}^{d+1}$
 and
$\kappa(V)\in\{0,1\}$ for all inner blocks $V\in\sigma$, $V\ne\{d+2\}$
or
$\sigma=\sigma_{1}^{d+1}$, $\kappa\in\mathcal{L}_{1}^{d+1}$, $\kappa(\{2,2d+2\})=1$
and $\kappa(V)\in\{0,1\}$ for all inner blocks $V\in\sigma$, $V\ne\{d+2\},\{2,2d+2\}$.
Therefore we have
\begin{align*}
R_{2d+3}t=&\widetilde{\gamma_0}t
\cdot\!\!\!\!\!\!\sum_{\sigma\in\mathrm{NC}_{1,2}(2d+1)}\!\!
\prod_{\substack{V\in\sigma\\|V|=1\\ V\in\mathrm{Out}(\sigma)}}
\!\!\!\!\!(\widetilde{\beta_1}-\beta_0)
\cdot\!\!\!\!\!\!\prod_{\substack{V\in\sigma\\|V|=1\\ V\in\mathrm{Inn}(\sigma)}}
\!\!\!\!(\beta_1-\beta_0)
\cdot\!\!\!\!\!\!\prod_{\substack{V\in\sigma\\|V|=2\\V\in\mathrm{Out}(\sigma)}}
\!\!\!\!(\widetilde{\gamma}_1-\gamma_0)
\cdot\!\!\!\!\!\!\prod_{\substack{V\in\sigma\\|V|=2\\V\in\mathrm{Inn}(\sigma)}}
\!\!\!\!(\gamma_1-\gamma_0)\\
&+\sum_{\kappa\in\mathcal{L}_1^{d+1}}\widetilde{w}_t(\sigma_1^{d+1},\kappa)+
\sum_{\kappa\in\mathcal{K}_1^{d+1}}\widetilde{w}_t(\sigma_1^{d+1},\kappa).
\end{align*}
In view of (\ref{c1}) we note that
\begin{align*}
\sum_{\kappa\in\mathcal{L}_1^{d+1}}\widetilde{w}_t(\sigma_1^{d+1},\kappa)&=
\widetilde{\gamma}_0 t\big(\widetilde{\gamma}_{1}(t)-\gamma_0(t)\big)
\big(\beta_d(t)-\beta_{d-1}(t)\big)
\widetilde{\gamma}_2(t)\ldots\widetilde{\gamma}_{d}(t)\\
&=\widetilde{\gamma}_0 t(\widetilde{\gamma}_{1}-\gamma_0)
\big(\beta_d(t)-\beta_{1}(t)\big)\gamma_1(t)^{d-1}\\
&=\widetilde{\gamma}_0 t(\widetilde{\gamma}_{1}-\gamma_0)c_1
\end{align*}
and that
\begin{align*}
\sum_{\kappa\in\mathcal{K}_1^{d+1}}&\widetilde{w}_t(\sigma_1^{d+1},\kappa)
=\widetilde{\gamma}_0 t\big(\widetilde{\beta}_{d+1}(t)-\beta_{d}(t)\big)
\widetilde{\gamma}_{1}(t)\widetilde{\gamma}_2(t)\ldots\widetilde{\gamma}_{d}(t)\\
&=\widetilde{\gamma}_0 t\big(\widetilde{\beta}_{d+1}(t)-\beta_{1}(t)\big)
\widetilde{\gamma}_{1}(t)\gamma_1(t)^{d-1}
+\widetilde{\gamma}_0 t\big(\beta_{1}(t)-\beta_{d}(t)\big)
\widetilde{\gamma}_{1}(t)\gamma_1(t)^{d-1}\\
&=\widetilde{\gamma}_0 t\big(\widetilde{\beta}_{d+1}(t)-\beta_{1}(t)\big)
\widetilde{\gamma}_{1}(t)\gamma_1(t)^{d-1}
-\widetilde{\gamma}_0 t\widetilde{\gamma}_{1}(t)c_1.
\end{align*}
If $\widetilde{\beta}_{d+1}(t)\ne\beta_{d-1}(t)$ then the first summand
is a polynomial of degree at least $d+1\ge3$ and if $c_1\ne0$
then the second one is a polynomial of degree $2$.
Therefore $c_1=0$ and
\begin{equation}\label{betad}
\widetilde{\beta}_{d+1}(t)=\beta_{d}(t)=\beta_1(t).
\end{equation}

Now we will study $\widetilde{\gamma}_{d+1}(t)$ and $\gamma_d(t)$
in a similar way.
Consider (\ref{freerm}) for $r_{2d+2}t$ and put
\begin{align*}
\sigma_2^{d}&:=\big\{\{1,2d+2\},\{2,2d+1\},\{3,2d\},\dots,\{d+1,d+2\}\big\},\\
\mathcal{K}_2^{d}&:=\{\kappa:(\sigma_2^{d},\kappa)\in\mathrm{NCL}_{1,2}^{1}(2d+2),
\,\,\kappa(\{d+1,d+2\})=d\}.
\end{align*}
Then by inductive assumption and by (\ref{betad}) we have
$$
r_{2d+2}t=
\gamma_0 t\!\!\!\!\sum_{\sigma\in\mathrm{NC}_{1,2}(2d)}
\prod_{\substack{V\in\sigma\\ |V|=1}}(\beta_1-\beta_0)\cdot\!\!
\prod_{\substack{V\in\sigma\\ |V|=2}}(\gamma_1-\gamma_0)+
\sum_{\kappa\in\mathcal{K}_2^{d}}w(\sigma_2^{d},\kappa)
$$
and similarly as before we see that
$$
\sum_{\kappa\in\mathcal{K}_2^{d}}w_t(\sigma_2^{d},\kappa)
=\gamma_0 t\big(\gamma_d(t)-\gamma_{1}(t)\big)\gamma_1(t)^{d-1},
$$
which implies that
\begin{equation}\label{c2}
\big(\gamma_d(t)-\gamma_{1}(t)\big)\gamma_1(t)^{d-1}=c_2
\end{equation}
for some constant $c_2$.

Now we consider (\ref{cfreerm2}) for $R_{2d+4}t$.
Put
\begin{align*}
\sigma_2^{d+1}&:=\big\{\{1,2d+4\},\{2,2d+3\},\{3,2d+2\},\dots,\{d+2,d+3\}\big\},\\
\mathcal{K}_2^{d+1}&:=
\left\{\kappa:(\sigma_2^{d+1},\kappa)\in\mathrm{NCL}_{1,2}^{1}(2d+4),
\,\,\kappa(\{d+2,d+3\})=d+1\right\},\\
\mathcal{L}_2^{d+1}&:=
\left\{\kappa:(\sigma_2^{d+1},\kappa)\in\mathrm{NCL}_{1,2}^{1}(2d+4),
\,\,\kappa(\{d+2,d+3\})=d\right\}.
\end{align*}
By our assumption and by (\ref{betad})
we have
\begin{align*}
R_{2d+4}t=&\widetilde{\gamma_0}t\cdot\!\!\!\!\!\!
\sum_{\sigma\in\mathrm{NC}_{1,2}(2d+2)}\!\!
\prod_{\substack{V\in\sigma\\|V|=1\\ V\in\mathrm{Out}(\sigma)}}
\!\!\!\!\!(\widetilde{\beta_1}-\beta_0)
\cdot\!\!\!\!\!\!\prod_{\substack{V\in\sigma\\|V|=1\\ V\in\mathrm{Inn}(\sigma)}}
\!\!\!\!(\beta_1-\beta_0)
\cdot\!\!\!\!\!\!\prod_{\substack{V\in\sigma\\|V|=2\\V\in\mathrm{Out}(\sigma)}}
\!\!\!\!(\widetilde{\gamma}_1-\gamma_0)
\cdot\!\!\!\!\!\!\prod_{\substack{V\in\sigma\\|V|=2\\V\in\mathrm{Inn}(\sigma)}}
\!\!\!\!(\gamma_1-\gamma_0)\\
&+\sum_{\kappa\in\mathcal{L}_2^{d+1}}\widetilde{w}_t(\sigma_2^{d+1},\kappa)+
\sum_{\kappa\in\mathcal{K}_2^{d+1}}\widetilde{w}_t(\sigma_2^{d+1},\kappa).
\end{align*}

Now similarly as before we note from (\ref{c2}) that
$$
\sum_{\kappa\in\mathcal{L}_1^{d+1}}\widetilde{w}_t(\sigma_1^{d+1},\kappa)=
t\widetilde{\gamma}_0 (\widetilde{\gamma}_{1}-\gamma_0)c_2,
$$
and that
$$
\sum_{\kappa\in\mathcal{K}_2^{d+1}}\widetilde{w}_t(\sigma_2^{d+1},\kappa)
=\widetilde{\gamma}_0 t \big(\widetilde{\gamma}_{d+1}(t)-\gamma_{1}(t)\big)
\widetilde{\gamma}_{1}(t)\gamma_1(t)^{d-1}
-\widetilde{\gamma}_0 t
\widetilde{\gamma}_{1}(t)c_2.
$$

If $\widetilde{\gamma}_{d+1}(t)\ne\gamma_{d-1}(t)$ then the first summand
is a polynomial of degree at least $d+1\ge3$ and if $c_2\ne0$
then the second is a polynomial of degree $2$.
Therefore $c_2=0$ and
\begin{equation}\label{gammad}
\widetilde{\gamma}_{d+1}(t)=\gamma_{d}(t)=\gamma_1(t),
\end{equation}
which completes the proof of the first part.

Conversely, suppose that $\widetilde{\mu}$ and $\mu$ (which in this case can be point masses) have Jacobi
parameters given by equations (\ref{cfreelinear1}) and (\ref{cfreelinear2}),
respectively. Then by Theorem~\ref{Thm:Linear-one-state}, \eqref{General-nu} holds for $\mu_t = \mu^{\boxplus t}$. Also, using notation~\eqref{Notation-tilde}, for $\widetilde{\mu}_t$ defined via equation \eqref{General-mu}, we have $\widetilde{\mu}_1 = \widetilde{\mu}$,
$R_1(\widetilde{\mu}_t,\mu_t)=\widetilde{\beta}_0 t$ and for $m\ge0$
\begin{equation}
\label{C-free-cumulant-expansion}
R_{m+2}(\widetilde{\mu}_t,\mu_t)=t\cdot\widetilde{\gamma_0}\!\!\!\!\!\!
\sum_{\sigma\in\mathrm{NC}_{1,2}(m)}\!\!
\prod_{\substack{V\in\sigma\\|V|=1\\ V\in\mathrm{Out}(\sigma)}}
\widetilde{b}
\cdot \prod_{\substack{V\in\sigma\\|V|=1\\ V\in\mathrm{Inn}(\sigma)}}
b
\cdot \prod_{\substack{V\in\sigma\\|V|=2\\V\in\mathrm{Out}(\sigma)}}
\widetilde{c}
\cdot \prod_{\substack{V\in\sigma\\|V|=2\\V\in\mathrm{Inn}(\sigma)}}
c,
\end{equation}
so that $R_m(\widetilde{\mu}_t,\mu_t)=t\cdot R_m(\widetilde{\mu},\mu)$. Therefore $\left(\widetilde{\mu}_t,\mu_t\right)=\left(\widetilde{\mu},\mu\right)^{\boxplus_c t}$.

Finally, it follows from equation \eqref{C-free-cumulant-expansion} that
 if $c \geq 0$ and $\widetilde{c} \geq 0$,
then $R_{m+2}(\widetilde{\mu},\mu)=\widetilde{\gamma}_0\cdot s_m(\widetilde{\rho})$,
which, in turn, yields (\ref{cfreelinearr}).
\end{proof}

\begin{Remark}
In addition to the convolution property with respect to the parameter $t$, note that for fixed $\widetilde{b},b,\widetilde{c},c$, the family of all two-state free Meixner distributions $(\widetilde{\mu},\mu)$ constitute a four-parameter $\boxplus_c$-semigroup with respect to the parameters
$\widetilde{\beta}_0, \beta_0$, $\widetilde{\gamma}_0,\gamma_0$ for $\gamma_0>0$, $\gamma_0\ge -c$, $\widetilde{\gamma}_0\ge \max(0, -\widetilde{c})$. Indeed, by formulas \eqref{freerm} and \eqref{cfreerm2} (see also \cite{Mlotkowski-Examples-conditional}), if $(\widetilde{\mu}',\mu')$ and $(\widetilde{\mu}'',\mu'')$ are two-state free Meixner distributions, with parameters $\widetilde{\beta}_0',\widetilde{b},\beta_0',b$, $\widetilde{\gamma}_0',\widetilde{c},\gamma_0',c$ and $\widetilde{\beta}_0'',\widetilde{b},\beta_0''$, $b,\widetilde{\gamma}_0'',\widetilde{c},\gamma_0'',c$ respectively, then $(\widetilde{\mu}',\mu')\boxplus_c(\widetilde{\mu}'',\mu'')$ is again a two-state free Meixner distribution with parameters
\[
\widetilde{\beta}_0'+\widetilde{\beta}_0'', \widetilde{b}, \beta_0'+\beta_0'', b, \widetilde{\gamma}_0'+\widetilde{\gamma}_0'', \widetilde{c}, \gamma_0'+\gamma_0'', c.
\]
\end{Remark}

\begin{Prop}
\label{Prop:Boolean}
Let $u\in\mathbb{R}$ and $\widetilde{\mu}$ be an arbitrary probability measure
on $\mathbb{R}$ with
\[
J(\widetilde{\mu})
=\begin{pmatrix}
\widetilde{\beta}_0, &\widetilde{\beta}_1,&\widetilde{\beta}_2,&\widetilde{\beta}_3,&\ldots \\
\widetilde{\gamma}_0,&\widetilde{\gamma}_1,&\widetilde{\gamma}_2,&\widetilde{\gamma}_3,&\ldots
\end{pmatrix}.
\]
For $t>0$ define $\widetilde{\mu}_t$ by
\[
J(\widetilde{\mu}_t)
=\begin{pmatrix}
\widetilde{\beta}_0 t, &\widetilde{\beta}_1+(t-1)u,&\widetilde{\beta}_2+(t-1)u,&
\widetilde{\beta}_3+(t-1)u,&\ldots \\
\widetilde{\gamma}_0 t,&\widetilde{\gamma}_1,&\widetilde{\gamma}_2,&\widetilde{\gamma}_3,&\ldots
\end{pmatrix}.
\]
Then the pair $(\widetilde{\mu},\delta_u)$ is $\boxplus_c$-infinitely divisible
and $\left\{\left(\widetilde{\mu}_t,\delta_{tu}\right)\right\}_{t>0}$ is the corresponding $\boxplus_c$-semigroup.

Moreover, we have
\[
R^{\widetilde{\mu},\delta_u}(z)=\widetilde{\beta}_0 z
+\widetilde{\gamma}_0 z^2\int_{\mathbb{R}}\frac{d\widetilde{\rho}(x)}{1-xz},
\]
where $\widetilde{\rho}$ is given by
\[
J(\widetilde{\rho})
=\begin{pmatrix}
\widetilde{\beta}_1-u, &\widetilde{\beta}_2-u,&\widetilde{\beta}_3-u,&\ldots \\
\widetilde{\gamma}_1,&\widetilde{\gamma}_2,&\widetilde{\gamma}_3,&\ldots
\end{pmatrix}.
\]
\end{Prop}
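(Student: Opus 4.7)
The plan is to work entirely with generating functions, exploiting the functional relation~\eqref{Relation:Two-state-transform} between $\eta^{\widetilde{\mu}}$, $M^\mu$, and $R^{\widetilde{\mu},\mu}$. For $\mu=\delta_u$ we have $M^{\delta_u}(z) = uz/(1-uz)$ and hence $1+M^{\delta_u}(z) = 1/(1-uz)$. Setting $w = (1+M^{\delta_u}(z))z = z/(1-uz)$, we get $z = w/(1+uw)$ and $1-uz = 1/(1+uw)$, so \eqref{Relation:Two-state-transform} inverts cleanly to
\[
R^{\widetilde{\mu},\delta_u}(w) = (1+uw)\,\eta^{\widetilde{\mu}}\!\left(\frac{w}{1+uw}\right).
\]

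The second step is to insert the continued fraction expansion~\eqref{Continued-fraction-Eta} of $\eta^{\widetilde{\mu}}$ into the right-hand side and identify the result. I would define, for $k\ge 1$,
\[
A_k(z) = \cfrac{\widetilde{\gamma}_{k-1} z^2}{1 - \widetilde{\beta}_k z - A_{k+1}(z)}, \qquad
B_k(w) = \cfrac{\widetilde{\gamma}_{k-1} w^2}{1 - (\widetilde{\beta}_k - u) w - B_{k+1}(w)},
\]
so that $\eta^{\widetilde{\mu}}(z) = \widetilde{\beta}_0 z + A_1(z)$, and $\widetilde{\beta}_0 w + B_1(w)$ is exactly the continued fraction for $\widetilde{\beta}_0 w + \widetilde{\gamma}_0 w^2\,(1 + M^{\widetilde{\rho}}(w))$ with $\widetilde{\rho}$ as in the statement. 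A one-line induction using the identity
\[
(1+uw)\bigl(1 - \widetilde{\beta}_k \cdot \tfrac{w}{1+uw}\bigr) = 1 - (\widetilde{\beta}_k - u) w
\]
shows $(1+uw) A_k\!\bigl(w/(1+uw)\bigr) = B_k(w)$ at each level. Combined with the first step and $(1+uw)\widetilde{\beta}_0 z = \widetilde{\beta}_0 w$, this establishes the stated formula~\eqref{cfreelinearr} for $R^{\widetilde{\mu},\delta_u}$.

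For the semigroup claim, I would simply apply the formula just proved to the pair $(\widetilde{\mu}_t, \delta_{tu})$, with the role of $u$ now played by $tu$. The Jacobi parameters of the corresponding auxiliary measure $\widetilde{\rho}_t$ have $\beta$-entries $\widetilde{\beta}_k + (t-1)u - tu = \widetilde{\beta}_k - u$ for $k\ge 1$ and $\gamma$-entries $\widetilde{\gamma}_k$ for $k\ge 1$; thus $\widetilde{\rho}_t = \widetilde{\rho}$ is independent of $t$. Consequently
\[
R^{\widetilde{\mu}_t,\delta_{tu}}(w) = t\widetilde{\beta}_0 w + t\widetilde{\gamma}_0 w^2\bigl(1+M^{\widetilde{\rho}}(w)\bigr) = t\cdot R^{\widetilde{\mu},\delta_u}(w),
\]
which (together with the obvious $\delta_{su}\boxplus\delta_{(t-s)u} = \delta_{tu}$) verifies both the semigroup identity $(\widetilde{\mu},\delta_u)^{\boxplus_c t} = (\widetilde{\mu}_t, \delta_{tu})$ and the $\boxplus_c$-infinite divisibility, since the construction is valid for every $t > 0$ (positivity of $\widetilde{\mu}_t$ is automatic because only the $\beta$'s are shifted and $\widetilde{\gamma}_0 t \ge 0$).

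The only genuinely non-routine step is the continued fraction manipulation in the second paragraph, but once the functions $A_k$ and $B_k$ are introduced it reduces to the single algebraic identity above applied recursively, so there is no serious obstacle. A minor point to verify is that the formal continued fraction manipulation makes sense as an identity of formal power series in $w$, which follows because the substitution $z = w/(1+uw)$ preserves the ideal $(w)$ and each truncation of the continued fraction is a rational function to which the identity applies directly.
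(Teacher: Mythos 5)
Your argument is correct, but it takes a genuinely different route from the paper. The paper proves this proposition combinatorially: it specializes the weight function $\widetilde{w}(V,k,\sigma)$ of \eqref{w-conditionally} to the pair $(\widetilde{\mu}_t,\delta_{tu})$, observes which labels can contribute, and sums over all labellings $\kappa$ of a fixed one-outer-block $\sigma\in\mathrm{NC}_{1,2}$ so that the $u$-dependent terms telescope, yielding the closed formula $R_{m+2}(\widetilde{\mu}_t,\delta_{tu})=t\widetilde{\gamma}_0\sum_{\sigma\in\mathrm{NC}_{1,2}(m)}\prod\widetilde{\gamma}_{d(V,\sigma)+1}\prod(\widetilde{\beta}_{d(V,\sigma)+1}-u)$, from which both the scaling $R_m(\widetilde{\mu}_t,\delta_{tu})=tR_m(\widetilde{\mu},\delta_u)$ and, via \eqref{accardiozejko}, the identification of the right-hand side with $\widetilde{\gamma}_0 s_m(\widetilde{\rho})$ follow at once. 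You instead invert the functional relation \eqref{Relation:Two-state-transform}, which for $\mu=\delta_u$ becomes the explicit M\"obius substitution $z=w/(1+uw)$, and then push the continued fraction \eqref{Continued-fraction-Eta} through that substitution level by level, using $(1+uw)\bigl(1-\widetilde{\beta}_k\tfrac{w}{1+uw}\bigr)=1-(\widetilde{\beta}_k-u)w$; comparing with \eqref{Continued-fraction-M} identifies the result as $\widetilde{\beta}_0w+\widetilde{\gamma}_0w^2(1+M^{\widetilde{\rho}}(w))$. The semigroup and infinite-divisibility claims then follow exactly as in the paper, since $\widetilde{\rho}$ is visibly unchanged when $(\widetilde{\mu},u)$ is replaced by $(\widetilde{\mu}_t,tu)$ and positivity of $\widetilde{\mu}_t$ is built into its Jacobi parameters. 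Your approach buys a shorter, transform-based proof that avoids the $\mathrm{NCL}_{1,2}^{1}$ bookkeeping entirely (the only care needed is the formal-power-series justification of the truncation/induction step, which you correctly flag); the paper's approach stays within the cumulant--Jacobi machinery used throughout and produces, as a byproduct, the explicit combinatorial expansion of $R_{m+2}(\widetilde{\mu}_t,\delta_{tu})$ over $\mathrm{NC}_{1,2}(m)$, parallel to \eqref{C-free-cumulant-expansion}.
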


\begin{proof}
Since $J(\delta_{tu})=\begin{pmatrix}
tu, &0,&0,&0,&\ldots \\
0,&0,&0,&0,&\ldots\end{pmatrix}$,
in calculating $R_m(\widetilde{\mu}_t,\delta_{tu})$
we can restrict ourselves to those pairs
$(\sigma,\kappa)\in\mathrm{NCL}_{1,2}^{1}(m)$
that for $V\in\sigma$ we have
either $|V|=2$, $\kappa(V)=d(V,\sigma)$
or $|V|=1$, $k\in\{1,d(V,\sigma)\}$. More precisely,
for the pair $(\widetilde{\mu}_t,\delta_{tu})$ we have in (\ref{w-conditionally})
\begin{equation*}
\widetilde{w}(V,k,\sigma):=
\left\{\begin{array}{ll}
\phantom{}t\widetilde{\beta}_0&\mbox{if $|V|=1$, $k=d(V,\sigma)=0$,}\\
\phantom{}\widetilde{t\gamma}_0&\mbox{if $|V|=2$ and $k=d(V,\sigma)=0$,}\\
\phantom{}\widetilde{\beta}_1-u\phantom{ee}
&\mbox{if $|V|=1$ and $k=d(V,\sigma)=1$,}\\
\phantom{}\widetilde{\beta}_k+(t-1)u&\mbox{if $|V|=1$ and $k=d(V,\sigma)>1$,}\\
\phantom{}-tu&\mbox{if $|V|=1$, $k=1<d(V,\sigma)$,}\\
\phantom{}\widetilde{\gamma}_k&\mbox{if $|V|=2$ and $k=d(V,\sigma)>0$,}\\
\phantom{}0&\mbox{otherwise.}
\end{array}\right.
\end{equation*}
This implies that for fixed $\sigma\in\mathrm{NC}_{1,2}^{1}(m)$, $m\ge2$,
with only one outer block, we have
\[
\sum_{\substack{\kappa\\(\sigma,\kappa)\in\mathrm{NCL}_{1,2}^{1}(m)}}
\prod_{V\in\sigma}\widetilde{w}(V,\kappa(V),\sigma)
=t\widetilde{\gamma}_0
\prod_{\substack{V\in\mathrm{Inn}(\sigma)\\|V|=2}}\widetilde{\gamma}_{d(V,\sigma)}
\prod_{\substack{V\in\sigma\\|V|=1}}\left(\widetilde{\beta}_{d(V,\sigma)}-u\right).
\]
Hence $R_{1}(\widetilde{\mu}_t,\delta_{tu})=\widetilde{\beta}_0 t$ and for $m\ge0$
we can write
\[
R_{m+2}(\widetilde{\mu}_t,\delta_{tu})
=t\widetilde{\gamma}_0\sum_{\sigma\in\mathrm{NC}_{1,2}(m)}
\prod_{\substack{V\in\sigma\\|V|=2}}\widetilde{\gamma}_{d(V,\sigma)+1}
\prod_{\substack{V\in\sigma\\|V|=1}}\left(\widetilde{\beta}_{d(V,\sigma)+1}-u\right).
\]
In particular, $R_{m}(\widetilde{\mu}_t,\delta_{tu})=tR_{m}(\widetilde{\mu},\delta_{u})$
for all $m\ge1$.
\end{proof}

\section{The two-state free Meixner class}

Free Meixner distributions arise in many results in free and Boolean probability theories. In this section we describe a number of appearances of the family from Theorem~\ref{Thm:Linear-two-state} in the two-state-free probability theory, which justify the name ``two-state free Meixner class''. Other places where measures with Jacobi parameters independent of $n$ for $n \geq 2$ were encountered include Theorems 11 and 12 of \cite{Kry-Woj-Associative}, examples in \cite{Lenczewski-Decompositions-convolution}, as well as \cite{Hinz-Mlotkowski-Free-Cumulants,Hinz-Mlotkowski-Limit-conditinally} and \cite{Hamada-Konno-Mlotkowksi}.

\begin{Defn}
A triple $(\mc{A}, \phi, \psi)$ is an (algebraic) \emph{two-state non-commutative probability space} if $\mc{A}$ is a $\ast$-algebra, and $\phi, \psi$ are states (positive, unital linear functionals) on it. A self-adjoint ($X = X^\ast$) element $X \in \mc{A}$ has the \emph{distribution} $(\widetilde{\mu}, \mu)$ in $(\mc{A}, \phi, \psi)$ if $\widetilde{\mu}, \mu$ are probability measures such that
\[
\phi[X^n] = s_n(\widetilde{\mu}), \qquad \psi[X^n] = s_n(\mu)
\]
for all $n \geq 0$.
\end{Defn}

The next remark is the analog of the three results in Remark~\ref{Remark:Free-Meixner}(a).

\begin{Remark}(Limit theorems and specific distributions)
\begin{enumerate}
\item
Theorem~4.3 of \cite{BLS96} is the two-state free central limit theorem. Let $(\widetilde{\nu}, \nu)$ be a pair of measures such that
\[
s_1(\widetilde{\nu}) = 0 = s_1(\nu), \qquad s_2(\widetilde{\nu}) = v \geq 0, \qquad s_2(\nu) = u \geq 0.
\]
Then, denoting by $D$ the dilation operator, we have the weak limit.
\[
D_{1/\sqrt{N}} (\widetilde{\nu}, \nu)^{\boxplus_c N} \rightarrow (\widetilde{\mu}, \mu).
\]
The authors give explicit formulas for the limit distributions, based on the observation that
\[
R^\mu(z) = u z^2, \qquad R^{\widetilde{\mu}, \mu}(z) = v z^2.
\]
Thus formulas \eqref{freelinearr} and \eqref{cfreelinearr} give
\[
\widetilde{\beta}_0 = \beta_0 = 0, \quad \widetilde{\gamma}_0 = v, \quad \gamma_0 = u, \quad \widetilde{\rho} = \rho = \delta_0, \quad \widetilde{b} = b = 0, \quad \widetilde{c} = c = 0.
\]
In other words
\[
J(\mu) =
\begin{pmatrix}
0, & 0, & 0 & \ldots \\
u, & u, & u, & \ldots
\end{pmatrix}
\]
and $\mu$ is a semicircular distributions, while
\[
J(\widetilde{\mu}) =
\begin{pmatrix}
0, & 0, & 0 & \ldots \\
v, & u, & u, & \ldots
\end{pmatrix}
\]
and $\widetilde{\mu}$ is a symmetric free Meixner distribution.
\item
Theorem~4.4 of \cite{BLS96} is the two-state free Poisson limit theorem. Let $(\widetilde{\nu}_N, \nu_N)$ be
\[
\widetilde{\nu}_N = \left(1 - \frac{q}{N} \right) \delta_0 + \frac{q}{N} \delta_1, \qquad \nu_N = \left(1 - \frac{p}{N} \right) \delta_0 + \frac{p}{N} \delta_1.
\]
Then we have the weak limit
\begin{equation}
\label{Poisson-limit}
(\widetilde{\nu}_N, \nu_N)^{\boxplus N} \rightarrow (\widetilde{\mu}, \mu).
\end{equation}
Again, the starting point for computing the explicit limit densities is the observation that
\[
R^\mu(z) = \frac{p z}{1 - z}, \qquad R^{\widetilde{\mu}, \mu}(z) = \frac{q z}{1 - z},
\]
In this case formulas \eqref{freelinearr} and \eqref{cfreelinearr} give
\[
\widetilde{\beta}_0 = \widetilde{\gamma}_0 = q, \quad \beta_0 = \gamma_0 = p, \quad \widetilde{\rho} = \rho = \delta_1, \quad \widetilde{b} = b = 1, \quad \widetilde{c} = c = 0.
\]
In other words
\[
J(\mu) =
\begin{pmatrix}
p, & 1 + p, & 1 + p & \ldots \\
p, & p, & p, & \ldots
\end{pmatrix}
\]
and $\mu$ is a free Poisson distribution, while
\[
J(\widetilde{\mu}) =
\begin{pmatrix}
q, & 1 + p, & 1 + p, & \ldots \\
q, & p, & p, & \ldots
\end{pmatrix}
\]
is a free Meixner distribution.
\item
A more general version of the Poisson limit theorem (which appears to be new) is to take
\[
\widetilde{\nu}_N = \left(1 - \frac{q}{N} \right) \delta_0 + \frac{q}{N} \delta_v, \qquad \nu_N = \left(1 - \frac{p}{N} \right) \delta_0 + \frac{p}{N} \delta_u
\]
for $u, v \neq 0$. In this case the limit distributions in \eqref{Poisson-limit} satisfy
\[
R^\mu(z) = \frac{p u z}{1 - u z}, \qquad R^{\widetilde{\mu}, \mu}(z) = \frac{q v z}{1 - v z}.
\]
Thus
\[
\widetilde{\beta}_0 = q v, \quad \beta_0 = p u, \quad \widetilde{\gamma}_0 = q v^2, \quad \gamma_0 = p u^2, \quad \widetilde{\rho} = \delta_v, \quad \rho = \delta_u, \quad \widetilde{b} = v, \quad b = u, \quad \widetilde{c} = c = 0.
\]
In other words
\[
J(\mu) =
\begin{pmatrix}
p u, & u + p u, & u + p u & \ldots \\
p u^2, & p u^2, & p u^2, & \ldots
\end{pmatrix}
\]
and $\mu$ is still a free Poisson distribution, but
\[
J(\widetilde{\mu}) =
\begin{pmatrix}
q v, & v + p u, & u + p u, & u + p u, & \ldots \\
q v^2, & p u^2, & p u^2, & p u^2, & \ldots
\end{pmatrix}
\]
only has Jacobi parameters independent of $n$ for $n \geq 2$.
\item
Let $(\mc{A}, \phi, \psi)$ be a two-state non-commutative probability space, and $X$ a self-adjoint idempotent such that
\[
\phi[X] = q, \quad \psi[X] = p
\]
with $0 < p, q < 1$. Thus the distributions $\widetilde{\mu}$ and $\mu$ of $X$ with respect to $\phi$ and $\psi$ are both Bernoulli distributions, so we can refer to the pair $(\widetilde{\mu}, \mu)$ as a two-state Bernoulli distribution. We now note that these distributions (and so their convolution powers, the two-state free binomial distributions) are two-state free Meixner distributions. Indeed,
\[
\widetilde{\mu} = (1 - q) \delta_0 + q \delta_1, \qquad \mu = (1 - p) \delta_0 + p \delta_1.
\]
It is easy to see that we can write
\[
J(\mu) =
\begin{pmatrix}
p, & 1 - p \\
p (1-p), & 0
\end{pmatrix}
\]
and
\[
J(\widetilde{\mu}) =
\begin{pmatrix}
q , & 1 - q \\
q (1-q), & 0
\end{pmatrix}
\]
(see the comment about finitely supported measures in Section~\ref{Subsec:Jacobi}). Thus $(\widetilde{\mu}, \mu)$ is a two-state free Meixner distribution with
\[
\widetilde{\beta}_0 = q, \; \beta_0 = p, \quad \widetilde{\gamma}_0 = q (1-q), \; \gamma_0 = p (1-p), \quad \widetilde{b} = 1 - p - q, \; b = 1 - 2p, \quad \widetilde{c} = c = -p (1-p).
\]
Consequently, the two-state free binomial distributions $(\widetilde{\mu}_t, \mu_t) = (\widetilde{\mu}, \mu)^{\boxplus_c t}$ have
\[
J(\widetilde{\mu}) =
\begin{pmatrix}
q t, & 1 - p - q + p t, & 1 - 2 p + p t, & 1 - 2 p + p t, & \ldots \\
q (1-q) t, & p (1-p) (t-1), & p (1-p) (t-1), & p (1-p) (t-1), & \ldots
\end{pmatrix}
\]
and
\[
J(\mu) =
\begin{pmatrix}
p t, & 1 - 2 p + p t, & 1 - 2 p + p t, & 1 - 2 p + p t, & \ldots \\
p (1-p) t, & p (1-p) (t-1), & p (1-p) (t-1), & p (1-p) (t-1), & \ldots
\end{pmatrix}
\]
for $t \geq 1$.
\end{enumerate}
\end{Remark}

The next proposition is the analog of Remark~\ref{Remark:Free-Meixner}(d).

\begin{Prop}
\label{Prop:Quadratic-R-transform}
$(\widetilde{\mu}, \mu)$ is a two-state free Meixner distribution if and only if its two-state free cumulants satisfy the recursion
\begin{equation}
\label{Two-state-recursion}
R_{m+2}(\widetilde{\mu}, \mu) = \widetilde{b} \ R_{m+1}(\widetilde{\mu}, \mu) + \frac{\widetilde{c}}{\gamma_0} \ \sum_{k=2}^{m} r_k(\mu) \ R_{m+2-k}(\widetilde{\mu}, \mu),
\end{equation}
(with $\gamma_0 \neq 0$) with the initial conditions
\[
R_1(\widetilde{\mu}, \mu) = \widetilde{\beta}_0, \qquad R_2(\widetilde{\mu}, \mu) = \widetilde{\gamma}_0,
\]
and the free cumulants of $\mu$ satisfy the recursion
\begin{equation}
\label{One-state-recursion}
r_{m+2}(\mu) = b \ r_{m+1}(\mu) + \frac{c}{\gamma_0} \ \sum_{k=2}^{m} r_k(\mu) \ r_{m+2-k}(\mu),
\end{equation}
with the initial conditions
\[
r_1(\mu) = \beta_0, \qquad r_2(\mu) = \gamma_0.
\]
Here
\begin{equation}
\label{Restrictions}
\gamma_0 > 0, \quad \widetilde{\gamma}_0 \geq 0, \quad \widetilde{c} + \gamma_0 \geq 0, \quad c + \gamma_0 \geq 0.
\end{equation}
Condition~\eqref{Two-state-recursion} is equivalent to
\[
\frac{R^{\widetilde{\mu}, \mu}(z) - \widetilde{\beta}_0 z}{z^2} = \widetilde{\gamma}_0 + \widetilde{b} \ \frac{R^{\widetilde{\mu}, \mu}(z) - \widetilde{\beta}_0 z}{z} + \frac{\widetilde{c}}{\gamma_0} \ \frac{R^{\mu}(z) - \beta_0 z}{z} \ \frac{R^{\widetilde{\mu}, \mu}(z) - \widetilde{\beta}_0 z}{z} .
\]
\end{Prop}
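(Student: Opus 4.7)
The plan is to translate each cumulant recursion into a functional equation for the corresponding $R$-transform, and then establish these functional equations by exploiting the continued fraction expansion together with Theorem~\ref{Thm:Linear-two-state}.

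First, introduce the generating series
$$
S(z) := \frac{R^{\widetilde{\mu},\mu}(z) - \widetilde{\beta}_0 z}{z^2} = \sum_{m \geq 0} R_{m+2}(\widetilde{\mu},\mu)\, z^m, \qquad T(z) := \frac{R^\mu(z) - \beta_0 z}{z^2} = \sum_{m \geq 0} r_{m+2}(\mu)\, z^m.
$$
Comparing coefficients of $z^m$, the two-state recursion \eqref{Two-state-recursion} together with $R_1 = \widetilde{\beta}_0$, $R_2 = \widetilde{\gamma}_0$ is equivalent to
$$
S(z) = \widetilde{\gamma}_0 + \widetilde{b}\, z S(z) + \frac{\widetilde{c}}{\gamma_0}\, z^2 T(z) S(z),
$$
which is precisely the displayed equation at the end of the proposition. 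Taking $(\widetilde{\mu},\mu) = (\mu,\mu)$ so that $S = T$ and $(\widetilde{b},\widetilde{c}) = (b,c)$, the same reasoning shows that \eqref{One-state-recursion} is equivalent to $T(z) = \gamma_0 + b z T(z) + (c/\gamma_0)\, z^2 T(z)^2$. So it remains to show that $(\widetilde{\mu},\mu)$ is two-state free Meixner if and only if $S$ and $T$ satisfy these two functional equations.

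For the forward direction, suppose $(\widetilde{\mu},\mu)$ is two-state free Meixner. By Theorem~\ref{Thm:Linear-two-state} (at $t=1$),
$$
R^{\widetilde{\mu},\mu}(z) = \widetilde{\beta}_0 z + \widetilde{\gamma}_0 z^2 K_{\widetilde{\rho}}(z), \qquad R^\mu(z) = \beta_0 z + \gamma_0 z^2 K_\rho(z),
$$
where $K_{\widetilde{\rho}} := 1 + M^{\widetilde{\rho}}$, $K_\rho := 1 + M^\rho$, and $\widetilde{\rho}$, $\rho$ have Jacobi parameters \eqref{rhotilde} and \eqref{jrho} respectively. Reading off the continued fraction expansion \eqref{Continued-fraction-M}, and using that the Jacobi data of $\widetilde{\rho}$ from level $1$ onward coincides with the entire Jacobi data of $\rho$, the self-similarity of the tail gives
$$
K_\rho(z) = \frac{1}{1 - b z - c z^2 K_\rho(z)}, \qquad K_{\widetilde{\rho}}(z) = \frac{1}{1 - \widetilde{b} z - \widetilde{c} z^2 K_\rho(z)}.
$$
Clearing denominators yields $K_\rho = 1 + b z K_\rho + c z^2 K_\rho^2$ and $K_{\widetilde{\rho}} = 1 + \widetilde{b} z K_{\widetilde{\rho}} + \widetilde{c} z^2 K_\rho K_{\widetilde{\rho}}$; substituting $T = \gamma_0 K_\rho$ and $S = \widetilde{\gamma}_0 K_{\widetilde{\rho}}$ gives exactly the two functional equations above.

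For the converse, note that under the constraints \eqref{Restrictions} the parameters $\widetilde{\beta}_0, \widetilde{\gamma}_0, \widetilde{b}, \widetilde{c}, \beta_0, \gamma_0, b, c$ determine a genuine two-state free Meixner pair $(\widetilde{\mu}^\star,\mu^\star)$ via Theorem~\ref{Thm:Linear-two-state}. The two recursions together with their initial values uniquely determine every $r_m(\mu)$ and every $R_m(\widetilde{\mu},\mu)$; by the forward direction applied to $(\widetilde{\mu}^\star,\mu^\star)$, these sequences coincide with the cumulants of $(\widetilde{\mu}^\star,\mu^\star)$. Since free and two-state free cumulants determine moments, and the compactly-supported moment problems at hand are determinate, we conclude $(\widetilde{\mu},\mu) = (\widetilde{\mu}^\star,\mu^\star)$. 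The only nontrivial ingredient is the continued-fraction self-similarity identity for $K_{\widetilde{\rho}}$ in terms of $K_\rho$; everything else is bookkeeping on formal power series.
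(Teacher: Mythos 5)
Your overall route---encoding both recursions as functional equations for $S=(R^{\widetilde{\mu},\mu}(z)-\widetilde{\beta}_0z)/z^2$ and $T=(R^{\mu}(z)-\beta_0z)/z^2$ and deriving them from the tail self-similarity of a continued fraction---is a genuinely different path from the paper's, which proves \eqref{Two-state-recursion} by decomposing the sum over $\NC_{1,2}(m)$ in \eqref{C-free-cumulant-expansion} according to the block containing the element $1$, and quotes Remark~\ref{Remark:Free-Meixner}(d) for \eqref{One-state-recursion}. Your coefficient bookkeeping identifying the recursion with the displayed functional equation is correct, and your converse (the recursions plus initial conditions determine all cumulants, which by the forward direction agree with those of the pair built from the same parameters) is essentially the paper's; the appeal to determinacy is not even needed, since being two-state free Meixner is a condition on Jacobi parameters, hence on moments alone.

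There is, however, a real gap in your forward direction: you invoke Theorem~\ref{Thm:Linear-two-state} at $t=1$ in the form \eqref{freelinearr}, \eqref{cfreelinearr}, with $\rho$ and $\widetilde{\rho}$ genuine probability measures having Jacobi parameters \eqref{jrho}, \eqref{rhotilde}. The paper establishes those integral representations only in the infinitely divisible case $c\ge0$, $\widetilde{c}\ge0$, whereas the proposition allows $-\gamma_0\le c<0$ and $-\gamma_0\le\widetilde{c}<0$; for example, the two-state Bernoulli and binomial distributions have $c=\widetilde{c}=-p(1-p)<0$. If $c<0$ there is no probability measure $\rho$ with all $\gamma_n=c$, so $K_\rho=1+M^\rho$ and the expansion \eqref{Continued-fraction-M} for it are not available as you use them, and likewise for $\widetilde{\rho}$ when $\widetilde{c}<0$. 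The argument can be repaired: by \eqref{free-cumulant-product} and \eqref{C-free-cumulant-expansion} at $t=1$ one has $r_{m+2}(\mu)=\gamma_0\sum_{\sigma\in\NC_{1,2}(m)}b^{s(\sigma)}c^{p(\sigma)}$, where $s(\sigma)$ and $p(\sigma)$ count singletons and pairs, and similarly for $R_{m+2}(\widetilde{\mu},\mu)$ with outer blocks weighted by $\widetilde{b},\widetilde{c}$; one may then define $K_\rho$ and $K_{\widetilde{\rho}}$ as the corresponding formal power series attached to the (possibly non-positive) Jacobi data, for which the Viennot--Flajolet continued-fraction identity still holds formally, and run your self-similarity argument---or decompose at the first block directly, which is exactly the paper's combinatorial step. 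As written, though, your proof does not cover the negative-$c$, negative-$\widetilde{c}$ part of the two-state free Meixner class.
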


\begin{proof}
Suppose $(\widetilde{\mu}, \mu)$ is a two-state free Meixner distribution. In particular, $\mu$ is not a point mass, so $\gamma_0 \neq 0$, and conditions~\eqref{Restrictions} are satisfied. Since $\mu$ is a free Meixner distribution, condition~\eqref{One-state-recursion} holds, see Remark~\ref{Remark:Free-Meixner}(d). The proof of \eqref{Two-state-recursion} is based on formula~\eqref{C-free-cumulant-expansion}. The initial conditions follow directly from that formula. Also,
\begin{align*}
R_3(\widetilde{\mu}, \mu) & = \widetilde{\gamma}_0 \widetilde{b} = \widetilde{b} \ R_2(\widetilde{\mu}, \mu), \\
R_4(\widetilde{\mu}, \mu) & = \widetilde{\gamma}_0 \widetilde{b}^2 + \widetilde{\gamma}_0 \widetilde{c} = \widetilde{b} \ R_3(\widetilde{\mu}, \mu) + \widetilde{c} \ \frac{r_2(\mu)}{\gamma_0} R_2(\widetilde{\mu}, \mu).
\end{align*}
For $m \geq 2$, if the element $\set{1}$ is an (outer) singleton of $\sigma$, it contributes $\widetilde{b}$, and the remaining classes of $\sigma$ form a partition in $\NC_{1,2}$ of $m-1$ elements. So the sum over all such partitions is $\widetilde{b} R_{m+1}(\widetilde{\mu}, \mu)$. If the element $1$ is not a singleton of $\sigma$, it belongs to an (outer) class $\set{1, k}$, for $2 \leq k \leq m$. This class contributes $\widetilde{c}$. The partition $\sigma$ restricted to the subset $\set{k+1, \ldots, m}$ is in $\NC_{1,2}$. $\sigma$ restricted to $\set{2, 3, \ldots, k-1}$ also is in $\NC_{1,2}$, but all the classes in this restriction are inner in $\sigma$, so the corresponding products contain only $b, c$ terms. Using equation~\eqref{free-cumulant-product}, it follows that the sum over all such partitions is exactly
\[
\widetilde{\gamma}_0 \widetilde{c} \sum_{k=2}^{m} \frac{r_{k}(\mu)}{\gamma_0} \frac{R_{m+2-k}(\widetilde{\mu}, \mu)}{\widetilde{\gamma}_0}
= \frac{\widetilde{c}}{\gamma_0} \sum_{k=2}^{m} r_{k}(\mu) R_{m+2-k}(\widetilde{\mu}, \mu).
\]
The formula for the generating functions follows.

Conversely, given a choice of parameters $b, c, \beta_0, \gamma_0, \widetilde{b}, \widetilde{c}, \widetilde{\beta}_0, \widetilde{\gamma}_0$ satisfying condition~\eqref{Restrictions}, the recursions determine the measures $(\widetilde{\mu}, \mu)$ uniquely, and by the first part of the argument, these are precisely two-state free Meixner distributions with this choice of parameters.
\end{proof}

\subsection{Laha-Lukacs characterization.}
A classical paper \cite{Laha-Lukacs} characterizes Meixner distributions in terms of certain conditional expectations. In \cite{Boz-Bryc}, the authors obtained a similar characterization of free Meixner distributions. The following is their result for the two-state free independence. Recall that $\mf{X}, \mf{Y}$ are $(\phi|\psi)$-free if all their mixed two-state free cumulants are zero, see the paper quoted below for the terminology. Note also that if $\phi$ is a tracial state, equation~\eqref{Conditioning} implies that the conditional expectation
\[
\phi[(\mf{X} - \mf{Y})^2 | \mf{S}] = C (4 \mf{I} + 2 b \mf{S} + c \mf{S}^2),
\]
is a quadratic function of $\mf{S}$, see Remark~\ref{Remark:Free-Meixner}(c). In two-state free probability theory, typically $\phi$ will not be tracial, and the conditional expectation with respect to it will not exist.

\begin{Thm*}[Theorem 2.1 in \cite{Boz-Bryc-Two-states}]
Let $(\mc{A}, \phi, \psi)$ be a two-state algebraic non-commutative probability space, and $\mf{X}, \mf{Y}$ two self-adjoint elements in it which are $(\phi|\psi)$-free and have the same distributions (with respect to both $\phi$ and $\psi$). Furthermore, assume that $\phi[\mf{X}] = 0$, $\phi[\mf{X}^2] = 1$. Let $\mf{S} = \mf{X} + \mf{Y}$ and suppose that there are $b, C \in \mf{R}$ and $c > -2$ such that
\begin{equation}
\label{Conditioning}
\phi[(\mf{X} - \mf{Y})^2 \mf{S}^n] = C \phi[(4 \mf{I} + 2 b \mf{S} + c \mf{S}^2) \mf{S}^n], n = 0, 1, 2 \ldots.
\end{equation}
Denote by $(\widetilde{\mu}_{\mf{S}}, \mu_{\mf{S}})$ the distribution of $\mf{S}$ in $(\mc{A}, \phi, \psi)$, and by $(\widetilde{\mu}, \mu)$ the corresponding distribution of $\mf{X}$ (and of $\mf{Y}$). Then
\begin{equation}
\label{Two-state-Laha-Lukacs}
1 + M^{\widetilde{\mu}_\mf{S}}(z) = \frac{2 + c - (2 b z + c) (1 + M^{\mu_\mf{S}}(z))}{2 + c - (4 z^2 + 2 b z + c) (1 + M^{\mu_\mf{S}}(z))}.
\end{equation}
\end{Thm*}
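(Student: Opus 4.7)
The plan is to turn the quadratic conditional variance hypothesis \eqref{Conditioning} into a functional equation for the ordinary moment generating functions
\[
F(z) := 1 + M^{\widetilde{\mu}_\mf{S}}(z) = \phi\bigl[(1 - z\mf{S})^{-1}\bigr], \qquad H(z) := 1 + M^{\mu_\mf{S}}(z) = \psi\bigl[(1 - z\mf{S})^{-1}\bigr],
\]
and then solve for $F$ in terms of $H$. Multiplying \eqref{Conditioning} by $z^n$ and summing over $n\ge 0$ gives the resolvent form
\[
\phi\bigl[(\mf{X} - \mf{Y})^2 (1 - z\mf{S})^{-1}\bigr] = C\,\phi\bigl[(4\mf{I} + 2b\mf{S} + c\mf{S}^2)(1 - z\mf{S})^{-1}\bigr].
\]
Setting $n = 0$ and using $\phi[\mf{X}\mf{Y}] = \phi[\mf{X}]\phi[\mf{Y}] = 0$ (a consequence of $(\phi|\psi)$-freeness valid for any identically-distributed $(\phi|\psi)$-free pair, regardless of $\psi$-centering) so that $\phi[\mf{S}^2] = 2$ and $\phi[(\mf{X}-\mf{Y})^2] = 2$, pins down the constant $C = 1/(2+c)$.

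The right-hand side is routine: polynomial division of $4 + 2b\mf{S} + c\mf{S}^2$ by $\mf{I} - z\mf{S}$ rewrites it as an affine function of $F(z)$ whose coefficients are rational in $z, b, c$. For the left-hand side, I would use the identity $(\mf{X} - \mf{Y})^2 = 2(\mf{X}^2 + \mf{Y}^2) - \mf{S}^2$ to reduce the task to computing $\phi[\mf{X}^2(1 - z\mf{S})^{-1}]$; by the symmetric roles of $\mf{X}$ and $\mf{Y}$ under both states this equals $\phi[\mf{Y}^2(1 - z\mf{S})^{-1}]$. To evaluate this mixed moment I would expand $(1 - z\mf{S})^{-1}$ as a geometric series in $\mf{X} + \mf{Y}$ and apply the defining property of conditional freeness: for $\psi$-centered alternating words in the $\mf{X}$- and $\mf{Y}$-subalgebras, $\phi$ factorizes across the factors. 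Equivalently, one may invoke the conditionally free moment--cumulant formula \eqref{C-free-moment-cumulant} with one distinguished $\mf{X}^2$ insertion. Either way, $\phi[\mf{X}^2(1 - z\mf{S})^{-1}]$ comes out as a rational function of $F(z)$ and $H(z)$, and equating the two sides then clearing denominators yields \eqref{Two-state-Laha-Lukacs}.

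The main obstacle will be carrying out this last evaluation cleanly. The bookkeeping for which letters in a mixed $\mf{X}/\mf{Y}$ word open outer blocks versus inner blocks in \eqref{C-free-moment-cumulant} is delicate once a fixed $\mf{X}^2$ factor is prescribed at the start, and a direct combinatorial attack risks being opaque. My preferred way to sidestep this is to work on the cumulant side: translate everything via \eqref{Relation:R-transform} and \eqref{Relation:Two-state-transform} into an identity involving $R^\mu(z)$ and $R^{\widetilde{\mu},\mu}(z)$. Since the hypothesis only constrains the first two free cumulants ($r_1(\mu) = 0$, $r_2(\mu) = 1$) and the two-state cumulants $R_1(\widetilde{\mu},\mu), R_2(\widetilde{\mu},\mu)$, the desired relation becomes a quadratic functional equation on the $R$-transform side, from which \eqref{Two-state-Laha-Lukacs} drops out by substituting back the $\eta$-transform relation \eqref{Relation:Eta-transform} and algebraic simplification.
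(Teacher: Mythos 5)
First, a point of comparison: the paper does not prove this statement at all --- it is quoted, with attribution, as Theorem~2.1 of Bo\.{z}ejko and Bryc \cite{Boz-Bryc-Two-states} and used as a black box. So your sketch can only be measured against that cited proof. Your overall strategy does mirror it: pass to generating functions, determine $C$ from the $n=0$ case (your observation that $\phi[\mf{X}\mf{Y}]=\phi[\mf{X}]\phi[\mf{Y}]=0$, hence $\phi[\mf{S}^2]=2$ and $C=1/(2+c)$, is correct), note that $\phi[(4\mf{I}+2b\mf{S}+c\mf{S}^2)(1-z\mf{S})^{-1}]$ is affine in $F(z)$, and reduce the left side via $(\mf{X}-\mf{Y})^2=2(\mf{X}^2+\mf{Y}^2)-\mf{S}^2$ and the symmetry $\phi[\mf{X}^2\mf{S}^n]=\phi[\mf{Y}^2\mf{S}^n]$.

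However, there is a genuine gap exactly where the theorem's content lies: the evaluation of $\sum_n z^n\,\phi[\mf{X}^2\mf{S}^n]$ (equivalently $\phi[\mf{X}^2(1-z\mf{S})^{-1}]$) as an explicit function of $F$ and $H$. You only assert that it ``comes out as a rational function of $F(z)$ and $H(z)$.'' In the cited argument this is precisely the hard step: one needs lemmas expressing $\phi[\mf{X}\,h(\mf{S})]$ and $\phi[\mf{X}^2 h(\mf{S})]$ through the two-state cumulant transform $R^{\widetilde{\mu},\mu}$ evaluated at $(1+M^{\mu_{\mf{S}}}(z))z$, combined with $R^{\widetilde{\mu}_{\mf{S}},\mu_{\mf{S}}}=2R^{\widetilde{\mu},\mu}$ and the relations \eqref{Relation:R-transform}--\eqref{Relation:Eta-transform}; your suggestion to ``expand the geometric series and factorize'' does not apply directly, since $\mf{X},\mf{Y}$ are not $\psi$-centered and the words in $\mf{S}^n$ are not alternating, so the needed bookkeeping (which you yourself flag as the obstacle) is simply skipped. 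Your fallback plan also misstates the hypotheses: $\phi[\mf{X}]=0$, $\phi[\mf{X}^2]=1$ force $R_1(\widetilde{\mu},\mu)=0$ and $R_2(\widetilde{\mu},\mu)=1$, \emph{not} $r_1(\mu)=0$, $r_2(\mu)=1$ --- no $\psi$-normalization of $\mf{X}$ is assumed, so the free cumulants of $\mu$ are unconstrained --- and \eqref{Conditioning} is an infinite family of moment identities, not a condition on finitely many cumulants, so the claimed ``quadratic functional equation on the $R$-transform side'' is asserted rather than derived. As it stands, the proposal is a plausible plan whose decisive computation is missing.
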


Bo{\.z}ejko and Bryc described the corresponding distributions more explicitly in particular cases corresponding to the Gaussian and Poisson regressions (that is, if $c = 0$). We now provide a complete description of the possible $\phi$ and $\psi$ distributions of $\mf{X}, \mf{Y}, \mf{S}$ which satisfy such regression relations. Recall that if $\phi = \psi$, then the $\psi$-distributions of $\mf{X}, \mf{Y}, \mf{S}$ are free Meixner distributions.

\begin{Prop}
In the context of the preceding theorem,
\[
J(\widetilde{\mu}_{\mf{S}}) =
\begin{pmatrix}
0, & b + (1 + c/2) \beta_0(\mu_{\mf{S}}), & \beta_1(\mu_{\mf{S}}), & \ldots \\
2, & (1 + c/2) \gamma_0(\mu_{\mf{S}}), & \gamma_1(\mu_{\mf{S}}), & \ldots
\end{pmatrix}.
\]
Suppose in addition that $\mu$ is a free Meixner distribution with Jacobi parameters \eqref{Free-Meixner-Jacobi}, with $\gamma_0 \geq 0$ and $c + \gamma_0 \geq 0$. Then provided that $c \geq -1$,
\[
J(\widetilde{\mu}) =
\begin{pmatrix}
0, & (b + c \beta_0) + \beta_0, & b + \beta_0, & b + \beta_0, & \ldots \\
1, & c \gamma_0 + \gamma_0, & c + \gamma_0, & c + \gamma_0, & \ldots
\end{pmatrix},
\]
Moreover, in this case $(\widetilde{\mu},\mu)$ is a two-state free Meixner distribution.
\end{Prop}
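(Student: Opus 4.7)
The plan is to extract $J(\widetilde{\mu}_{\mf{S}})$ directly from the functional equation~\eqref{Two-state-Laha-Lukacs} by converting it into continued-fraction form. First I would pass to $\eta$-transforms via~\eqref{Relation:Eta-transform}: taking reciprocals in~\eqref{Two-state-Laha-Lukacs} and subtracting from $1$, the $(2bz+c)$ and $c$ terms in the numerator cancel, leaving
\[
\eta^{\widetilde{\mu}_{\mf{S}}}(z) \;=\; \frac{4 z^2 \,(1 + M^{\mu_{\mf{S}}}(z))}{(2+c) - (2 b z + c)(1 + M^{\mu_{\mf{S}}}(z))}.
\]
Dividing numerator and denominator by $(1 + M^{\mu_{\mf{S}}})$ and using $(1 + M^{\mu_{\mf{S}}})^{-1} = 1 - \eta^{\mu_{\mf{S}}}$ collapses this to
\[
\eta^{\widetilde{\mu}_{\mf{S}}}(z) \;=\; \frac{2 z^2}{1 - b z - (1 + c/2)\, \eta^{\mu_{\mf{S}}}(z)}.
\]

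Now substitute the continued-fraction expansion~\eqref{Continued-fraction-Eta} of $\eta^{\mu_{\mf{S}}}$ into this denominator. The factor $(1 + c/2)$ multiplies only the top-level parameters $\beta_0(\mu_{\mf{S}})$ and $\gamma_0(\mu_{\mf{S}})$, while all deeper levels pass through verbatim. Matching the resulting continued fraction against the standard expansion~\eqref{Continued-fraction-Eta} of $\eta^{\widetilde{\mu}_{\mf{S}}}$ term by term reads off $\beta_0(\widetilde{\mu}_{\mf{S}}) = 0$, $\gamma_0(\widetilde{\mu}_{\mf{S}}) = 2$, $\beta_1(\widetilde{\mu}_{\mf{S}}) = b + (1 + c/2)\beta_0(\mu_{\mf{S}})$, $\gamma_1(\widetilde{\mu}_{\mf{S}}) = (1 + c/2)\gamma_0(\mu_{\mf{S}})$, and $\beta_n(\widetilde{\mu}_{\mf{S}}) = \beta_{n-1}(\mu_{\mf{S}})$, $\gamma_n(\widetilde{\mu}_{\mf{S}}) = \gamma_{n-1}(\mu_{\mf{S}})$ for $n \geq 2$. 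This is the first formula of the proposition.

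For the second half, apply Theorem~\ref{Thm:Linear-one-state} at $t = 2$ to compute $J(\mu_{\mf{S}}) = J(\mu^{\boxplus 2})$: $\beta_0(\mu_{\mf{S}}) = 2\beta_0$, $\gamma_0(\mu_{\mf{S}}) = 2\gamma_0$, and $\beta_n(\mu_{\mf{S}}) = b + 2\beta_0$, $\gamma_n(\mu_{\mf{S}}) = c + 2\gamma_0$ for $n \geq 1$. Substituting into the formula just established yields
\[
J(\widetilde{\mu}_{\mf{S}}) \;=\; \begin{pmatrix} 0, & b + (2+c)\beta_0, & b + 2\beta_0, & \ldots \\ 2, & (2+c)\gamma_0, & c + 2\gamma_0, & \ldots \end{pmatrix}.
\]
Matching this against the two-state free Meixner template~\eqref{General-mu}--\eqref{General-nu} at $t = 2$ forces the identifications $\widetilde{\beta}_0 = 0$, $\widetilde{\gamma}_0 = 1$, $\widetilde{b} = b + c\beta_0$, $\widetilde{c} = c\gamma_0$; the hypothesis $c \geq -1$ is exactly the positivity constraint $\widetilde{c} + \gamma_0 = \gamma_0(1 + c) \geq 0$ required by the two-state free Meixner class.

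Finally, let $\widetilde{\mu}_1$ be the measure with the Jacobi parameters stated in the proposition, so that $(\widetilde{\mu}_1, \mu)$ is by construction a two-state free Meixner pair with the parameters just identified. By the converse direction of Theorem~\ref{Thm:Linear-two-state}, $(\widetilde{\mu}_1, \mu)^{\boxplus_c 2}$ has Jacobi parameters equal to those of $(\widetilde{\mu}_{\mf{S}}, \mu_{\mf{S}})$ displayed above. Since $(\widetilde{\mu}, \mu)^{\boxplus_c 2} = (\widetilde{\mu}_{\mf{S}}, \mu_{\mf{S}})$ by definition, cumulant linearity $R_m((\cdot)^{\boxplus_c 2}) = 2 R_m(\cdot)$ gives $R_m(\widetilde{\mu}_1, \mu) = R_m(\widetilde{\mu}, \mu)$ for every $m$, and then the moment-cumulant formula~\eqref{C-free-moment-cumulant} with the common $\mu$ forces $\widetilde{\mu} = \widetilde{\mu}_1$. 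The main obstacle is the algebraic simplification producing the precise coefficient $(1 + c/2)$: it is this factor alone that makes the top-level slots of $J(\widetilde{\mu}_{\mf{S}})$ line up at $t = 2$ with the $\widetilde{b}, \widetilde{c}$ positions of the free-Meixner template~\eqref{General-mu}, so that the two-state free Meixner identification comes out on the nose.
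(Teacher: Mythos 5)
Your proposal is correct and follows essentially the same route as the paper: pass to $\eta$-transforms to turn \eqref{Two-state-Laha-Lukacs} into $\eta^{\widetilde{\mu}_{\mf{S}}} = 2z^2/\bigl(1 - bz - (1+c/2)\eta^{\mu_{\mf{S}}}\bigr)$, read off $J(\widetilde{\mu}_{\mf{S}})$ from \eqref{Continued-fraction-Eta}, apply Theorem~\ref{Thm:Linear-one-state} at $t=2$, and identify the result as the $t=2$ member of the two-state free Meixner family with $\widetilde{b}=b+c\beta_0$, $\widetilde{c}=c\gamma_0$. The only difference is that you spell out the descent to $t=1$ via cumulant halving and the moment-cumulant formula, which the paper leaves implicit in its ``setting $t=1$ instead'' step.
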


\begin{proof}
Using equations~\eqref{Relation:Eta-transform} and \eqref{Two-state-Laha-Lukacs},
\[
\begin{split}
\eta^{\widetilde{\mu}_{\mf{S}}}
& = 1 - (1 + M^{\widetilde{\mu}_{\mf{S}}})^{-1} \\
& = 1 - \frac{2 + c - (4 z^2 + 2 b z + c) (1 + M^{\mu_\mf{S}}(z))}{2 + c - (2 b z + c) (1 + M^{\mu_\mf{S}}(z))} \\
& = \frac{4 z^2 (1 + M^{\mu_\mf{S}}(z))}{2 + c - (2 b z + c) (1 + M^{\mu_\mf{S}}(z))} \\
& = - \frac{4 z^2}{2 b z + c - (2 + c) \left(1 + M^{\mu_{\mf{S}}}\right)^{-1}} \\
& = - \frac{4 z^2}{2 b z + c - (2 + c) (1 - \eta^{\mu_{\mf{S}}})} \\
& = - \frac{4 z^2}{-2 + 2 b z + (2 + c) \eta^{\mu_{\mf{S}}}} \\
& = \frac{2 z^2}{1 - b z - (1 + c/2) \eta^{\mu_{\mf{S}}}}.
\end{split}
\]
Comparing with the continued fraction expansion~\eqref{Continued-fraction-Eta}, we see that in terms of the Jacobi parameters of $\mu_{\mf{S}}$,
\[
J(\widetilde{\mu}_{\mf{S}}) =
\begin{pmatrix}
0, & b + (1 + c/2) \beta_0(\mu_{\mf{S}}), & \beta_1(\mu_{\mf{S}}), & \ldots \\
2, & (1 + c/2) \gamma_0(\mu_{\mf{S}}), & \gamma_1(\mu_{\mf{S}}), & \ldots
\end{pmatrix}.
\]
Now suppose that $\mf{X}$ has, with respect to $\psi$, a free Meixner distribution with Jacobi parameters \eqref{Free-Meixner-Jacobi}, which in particular means $\gamma_0 \geq 0$ and $c + \gamma_0 \geq 0$. Then by Theorem~\ref{Thm:Linear-one-state},
\[
J(\mu_{\mf{S}}) = J(\mu^{\boxplus 2}) =
\begin{pmatrix}
2\beta_0, & b + 2\beta_0, & b + 2\beta_0, & \ldots \\
2\gamma_0, & c + 2\gamma_0, & c + 2\gamma_0, & \ldots
\end{pmatrix}.
\]
So
\[
J(\widetilde{\mu}_{\mf{S}}) =
\begin{pmatrix}
0, & (b + c \beta_0) + 2 \beta_0, & b + 2\beta_0, & b + 2\beta_0, & \ldots \\
2, & c \gamma_0 + 2 \gamma_0, & c + 2\gamma_0, & c + 2\gamma_0, & \ldots
\end{pmatrix}.
\]
This is the $t=2$ case of
\[
J(\widetilde{\mu}_t) =
\begin{pmatrix}
0, & (b + c \beta_0) + \beta_0 t, & b + \beta_0 t, & b + \beta_0 t, & \ldots \\
t, & c \gamma_0 + \gamma_0 t, & c + \gamma_0 t, & c + \gamma_0 t, & \ldots
\end{pmatrix},
\]
with $\widetilde{b} = b + c \beta_0$ and $\widetilde{c} = c \gamma_0$. By setting $t=1$ instead, we get
\[
J(\widetilde{\mu}) =
\begin{pmatrix}
0, & (b + c \beta_0) + \beta_0, & b + \beta_0, & b + \beta_0, & \ldots \\
1, & c \gamma_0 + \gamma_0, & c + \gamma_0, & c + \gamma_0, & \ldots
\end{pmatrix}.
\]
This defines a positive measure provided that $c \geq -1$. Combining this with
\[
J(\mu) =
\begin{pmatrix}
\beta_0, & b + \beta_0, & b + \beta_0, & \ldots \\
\gamma_0, & c + \gamma_0, & c + \gamma_0, & \ldots
\end{pmatrix},
\]
we see that $(\widetilde{\mu},\mu)$ is a two-state free Meixner distribution.
\end{proof}

\subsection{Free quadratic harnesses}
In a series of papers starting with \cite{Bryc-Meixner}, Bryc and Weso{\l}owski (along with Matysiak and Szab{\l}owski) have investigated quadratic harnesses.
These are square-integrable processes $(X_t)_{t\ge0}$, with normalization $\mf{E}[X_t] = 0$, $\mf{E}[X_t X_s] = \min(t,s)$, such that $\mf{E}[X_t | \mc{F}_{s,u}]$ is a linear function of $X_s, X_u$ and $\Var[X_t |\mc{F}_{s,u}]$ is a quadratic function of $X_s, X_u$. Here $\mc{F}_{s,u}$ is the two-sided $\sigma$-field generated by $\set{X_r: r \in [0, s] \cup [u, \infty)}$. Then
\begin{equation}
\label{C-expectation}
\mf{E}[X_t | \mc{F}_{s,u}] = \frac{u - t}{u - s} X_s + \frac{t - s}{u - s} X_u
\end{equation}
and under certain technical assumptions (see \cite{Bryc-Matysiak-Wesolowski-q-commutation}),
\begin{equation}
\label{C-variance}
\begin{split}
\Var[X_t |\mc{F}_{s,u}] & = \frac{(u - t) (t - s)}{u (1 + \sigma s) + \tau - \gamma s} \biggl( 1 + \sigma \frac{(u X_s - s X_u)^2}{(u - s)^2} + \tau \frac{(X_u - X_s)^2}{(u - s)^2} \\
&\quad + \eta \frac{u X_s - s X_u}{u - s} + \theta \frac{X_u - X_s}{u - s}  - (1 - \gamma) \frac{(X_u - X_s)(u X_s - s X_u)}{(u - s)^2} \biggr).
\end{split}
\end{equation}
The authors proved the existence of such processes for a large range of parameters $\sigma, \tau, \eta, \theta, \gamma$, in particular connecting the analysis to the Askey-Wilson measures in \cite{Bryc-Wesolowski-Askey-Wilson} (the standard Askey-Wilson parameter is $q = \gamma + \sigma \tau$). One reason for the interest in this analysis comes from numerous particular cases.
\begin{enumerate}
\item
If $\gamma = 1$ and $\sigma = \eta = 0$, the processes automatically have classically independent increments, and each $X_t$ has a Meixner distribution, see \cite{Wes-commutative}.
\item
For $\gamma = \sigma = \eta = 0$, the processes are classical versions of processes with free independent increments, and have free Meixner distributions.
\item
For $\sigma = \eta = 0$ and $-1 \leq \gamma = q < 1$, the corresponding orthogonal martingale polynomials have Jacobi parameters
\[
\left(
\begin{array}{rl}
\beta_n(t) & = \theta [n]_q \\
\gamma_n(t) & = [n+1]_q (t + \tau [n]_q) \\
\end{array}
\right),
\]
where $[n]_q:=1+q+\ldots+q^{n-1}$ is the $q$-integer. If $\tau = 0$, the process is a (classical version of a) $q$-Poisson process from \cite{AnsQCum}. The case where in addition, $\theta = 0$ was considered even earlier in \cite{Bryc-Stationary-random-fields} and corresponds to the $q$-Brownian motion \cite{BKSQGauss}. The challenge of interpreting the general processes with $\sigma = \eta = 0$ as ``processes with $q$-independent increments'' remains open.
\item
Finally, for $\gamma = \sigma = \tau = 0$, the free bi-Poisson processes from \cite{Bryc-Wesolowski-Bi-Poisson} are shown, in Section~4 of that paper, to have increments freely independent with respect to a pair of states.
\end{enumerate}
We will now extend the last result above. Proposition~4.3 of \cite{Bryc-Matysiak-Wesolowski-q-commutation} states that for
\[
q = \gamma + \sigma \tau = 0,
\]
there exist orthogonal martingale polynomials for the process. They satisfy recursion relations $P_0(x,t) = 1$,
\begin{align*}
x P_0(x,t) & = P_1(x,t), \\
x P_1(x,t) & = (1 + \sigma t) P_2(x,t) + (u t + v) P_1(x,t) + t P_0(x,t), \\
x P_2(x,t) & = (1 + \sigma t) P_3(x,t) + \left( \frac{u + \sigma v}{1 - \sigma \tau} t + \frac{v + \tau u}{1 - \sigma \tau} \right) P_2(x,t) + \frac{1 + u v}{1 - \sigma \tau} (t + \tau) P_1(x,t), \\
x P_n(x,t) & = (1 + \sigma t) P_{n+1}(x,t) + \left( \frac{u + \sigma v}{1 - \sigma \tau} t + \frac{v + \tau u}{1 - \sigma \tau} \right) P_n(x,t) + \frac{1 + u v}{(1 - \sigma \tau)^2} (t + \tau) P_{n-1}(x,t) \\
\end{align*}
for $n \geq 3$, where
\[
u = \frac{\eta + \sigma \theta}{1 - \sigma \tau}, \quad v = \frac{\tau \eta + \theta}{1 - \sigma \tau}
\]
and as long as
\begin{equation*}
\label{B-W-normalization}
1 + u v > 0, 0 \leq \sigma \tau < 1.
\end{equation*}
Note that the coefficients in this recursion are linear in $t$, so the corresponding tridiagonal matrix is of the form in equation~\eqref{Jacobi-matrix}. Moreover the coefficients are constant for $n \geq 2$, so this class is very close to the families considered in this paper. However, the corresponding polynomials are not monic. The Jacobi parameters for the monic orthogonal polynomials for this process (which are not martingale polynomials) are quadratic in $t$ (see Corollary~\ref{Cor:Jacobi}). Therefore they do not form a semigroup with respect to any of the convolutions considered in this paper, unless $\sigma = 0$.

\begin{Remark}
It is a fundamental observation of Bryc et al. that if a process $(X_t)$ satisfies properties \eqref{C-expectation} and \eqref{C-variance}, so does the process $Y_t = t X_{1/t}$, as long as parameters $\sigma \leftrightarrow \tau$ and $\eta \leftrightarrow \theta$ are interchanged. In particular, the class of free quadratic harnesses is closed under such a time-reversal operation. On the other hand, it is easy to see that the class of families $\set{\widetilde{\mu}_t}$ which may arise in Theorem~\ref{Thm:Linear-two-state} is \emph{not} closed under this operation. In fact, the largest sub-family of this class which is closed in this way corresponds to $\tilde{b} = b$, $\tilde{c} = c = 0$, which is precisely (up to re-scaling) the class of free bi-Poisson processes.
\end{Remark}

\begin{Prop}
Let $q = 0$, $\sigma = 0$, $\tau > 0$, $\eta \neq 0$, and $1 + \eta(\tau \eta + \theta) > 0$. Denoting by $\widetilde{\mu}_t$ the distribution of $X_t$, there exist $\set{\mu_t}$ such that the pairs $\set{(\widetilde{\mu}_t, \mu_t)}$ form a two-state free convolution semigroup. Also in this case, $\widetilde{\rho}$ is a free Poisson distribution.
\end{Prop}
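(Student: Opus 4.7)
The plan is to specialize the martingale polynomial recursion of Proposition~4.3 of \cite{Bryc-Matysiak-Wesolowski-q-commutation} to the present parameter values and then invoke Theorem~\ref{Thm:Linear-two-state}. First I would set $\sigma = 0$ (so $\gamma = q - \sigma\tau = 0$ as well), at which point $1 + \sigma t = 1$ and $1 - \sigma\tau = 1$, so the martingale polynomials $\set{P_n(\cdot,t)}$ are in fact monic and hence directly furnish the Jacobi parameters of $\widetilde{\mu}_t$. With $u = \eta$ and $v = \tau\eta + \theta$, I would read off $\widetilde{\beta}_0(t) = 0$, $\widetilde{\beta}_1(t) = ut + v$, $\widetilde{\beta}_n(t) = u(t + \tau) + v$ for $n \geq 2$, and $\widetilde{\gamma}_0(t) = t$, $\widetilde{\gamma}_n(t) = (1+uv)(t + \tau)$ for $n \geq 1$.

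The key observation is that these parameters are affine in $t$ and constant in $n$ for $n \geq 2$, matching exactly the form~\eqref{General-mu}. Matching coefficient by coefficient gives $\widetilde{\beta}_0 = 0$, $\beta_0 = u = \eta$, $\widetilde{b} = v = \tau\eta + \theta$, $b = u\tau + v = 2\tau\eta + \theta$, $\widetilde{\gamma}_0 = 1$, $\gamma_0 = 1 + uv = 1 + \eta(\tau\eta + \theta)$, and $\widetilde{c} = c = (1+uv)\tau$. The positivity hypotheses of Theorem~\ref{Thm:Linear-two-state} then all hold: $\gamma_0 > 0$ by assumption, $\widetilde{\gamma}_0 t = t \geq 0$, and $\widetilde{c} + \gamma_0 t = c + \gamma_0 t = (1+uv)(t + \tau) \geq 0$ because $\tau > 0$. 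Hence the theorem applies and yields a family $\set{\mu_t}$ with Jacobi parameters \eqref{General-nu} making $\set{(\widetilde{\mu}_t, \mu_t)}$ a two-state free convolution semigroup.

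Finally, from formula \eqref{rhotilde} the Jacobi parameters of $\widetilde{\rho}$ are $\beta_0(\widetilde{\rho}) = \widetilde{b} = v$, $\beta_n(\widetilde{\rho}) = b = \tau u + v$ for $n \geq 1$, and $\gamma_n(\widetilde{\rho}) = (1+uv)\tau$ for every $n \geq 0$. In the free Meixner parametrization these correspond to increment parameters $b(\widetilde{\rho}) = b - \beta_0(\widetilde{\rho}) = \tau u = \tau \eta$, which is nonzero since $\tau > 0$ and $\eta \neq 0$, and $c(\widetilde{\rho}) = 0$. By Remark~\ref{Remark:Free-Meixner}(a) this identifies $\widetilde{\rho}$ as a free Poisson (Marchenko--Pastur) distribution, completing the proof. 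The argument is essentially bookkeeping: the only point requiring attention is the use of $\sigma = 0$ to ensure the martingale polynomials are monic, which is what lets one equate their recurrence coefficients with the Jacobi parameters of $\widetilde{\mu}_t$.
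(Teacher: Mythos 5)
Your proposal is correct and follows essentially the same route as the paper: specialize the Bryc--Matysiak--Weso{\l}owski recursion to $\sigma=0$ (where the martingale polynomials become monic), read off the Jacobi parameters of $\widetilde{\mu}_t$, match them with the form \eqref{General-mu} to extract $\widetilde{\beta}_0, \beta_0, \widetilde{b}, b, \widetilde{\gamma}_0, \gamma_0, \widetilde{c}, c$, and invoke Theorem~\ref{Thm:Linear-two-state} to produce $\set{\mu_t}$ and identify $\widetilde{\rho}$ via \eqref{rhotilde}. Your explicit verification that $\widetilde{\rho}$ has free Meixner increment parameters $\tau\eta \neq 0$ and $0$, hence is free Poisson by Remark~\ref{Remark:Free-Meixner}(a), is a detail the paper states without comment, but the arguments coincide.
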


\begin{proof}
Since $\sigma = 0$, we have $u = \eta$, $v = \tau \eta + \theta$. Thus the recursion above gives
\[
J(\widetilde{\mu}_t) =
\begin{pmatrix}
0, & \tau \eta + \theta + \eta t, & 2 \tau \eta + \theta + \eta t, & 2 \tau \eta + \theta + \eta t, & \ldots \\
t, & (1 + \eta (\tau \eta + \theta)) (t + \tau), & (1 + \eta (\tau \eta + \theta)) (t + \tau), & (1 + \eta (\tau \eta + \theta)) (t + \tau), & \ldots
\end{pmatrix},
\]
Identifying the coefficients in the recursions above with parameters in our Theorem~\ref{Thm:Linear-two-state} gives
\[
\widetilde{\beta}_0 = 0, \quad \beta_0 = \eta, \quad \widetilde{b} = \tau \eta + \theta, \quad b = 2 \tau \eta + \theta
\]
and
\[
\widetilde{\gamma}_0 = 1, \quad \gamma_0 = 1 + \eta (\tau \eta + \theta), \quad \widetilde{c} = c = \tau (1 + \eta (\tau \eta + \theta)).
\]
Thus taking $\mu_t$ to be the free Meixner distributions with
\[
J(\mu_t) =
\begin{pmatrix}
\eta t, & 2 \tau \eta + \theta + \eta t, & 2 \tau \eta + \theta + \eta t, & \ldots \\
(1 + \eta (\tau \eta + \theta)) t, & (1 + \eta (\tau \eta + \theta)) (t + \tau), & (1 + \eta (\tau \eta + \theta)) (t + \tau), & \ldots
\end{pmatrix},
\]
the pairs $\set{(\widetilde{\mu}_t, \mu_t)}$ form a two-state free convolution semigroup.  Also,
\[
J(\widetilde{\rho}) =
\begin{pmatrix}
\tau \eta + \theta, & 2 \tau \eta + \theta, & 2 \tau \eta + \theta, & \ldots \\
\tau (1 + \eta (\tau \eta + \theta)), & \tau (1 + \eta (\tau \eta + \theta)), & \tau (1 + \eta (\tau \eta + \theta)), & \ldots
\end{pmatrix}
\]
and $\widetilde{\rho}$ is a free Poisson distribution.
\end{proof}

\begin{Remark}
Setting $\tau = 0$, $\theta \neq 0$ corresponds to the free bi-Poisson process with
\[
J(\mu_t) =
\begin{pmatrix}
\eta t, & \theta + \eta t, & \theta + \eta t, & \ldots \\
(1 + \eta \theta) t, & (1 + \eta \theta) t, & (1 + \eta \theta) t, & \ldots
\end{pmatrix},
\]
a free Poisson distribution, $\widetilde{\rho} = \delta_\theta$, and
\[
J(\widetilde{\mu}_t) =
\begin{pmatrix}
0, & \theta + \eta t, & \theta + \eta t, & \theta + \eta t, & \ldots \\
t, & (1 + \eta \theta) t, & (1 + \eta \theta) t, & (1 + \eta \theta) t, & \ldots
\end{pmatrix},
\]
a free Meixner distribution. Additionally setting $\theta = 0$ gives $\mu_t$ a (non-centered) semicircular distribution and $\widetilde{\mu}_t = \mu_{\eta, 0}^{\boxplus t}$ a free Poisson distribution.

On the other hand, restriction to $\eta = 0$ gives
\[
\mu_t = \widetilde{\mu}_t = \mu_{\theta, \tau}^{\boxplus t}
\]
equal free Meixner distributions, and $\widetilde{\rho}$ a semicircular distribution.
\end{Remark}


\def\cprime{$'$}
\providecommand{\bysame}{\leavevmode\hbox to3em{\hrulefill}\thinspace}
\providecommand{\MR}{\relax\ifhmode\unskip\space\fi MR }
\providecommand{\MRhref}[2]{%
  \href{http://www.ams.org/mathscinet-getitem?mr=#1}{#2}
}
\providecommand{\href}[2]{#2}

\end{document}